\theoremstyle{plain}
\newtheorem{thm}{Theorem}[section]
\newtheorem{prop}[thm]{Proposition}
\newtheorem{cor}[thm]{Corollary}
\newtheorem{lem}[thm]{Lemma}
\theoremstyle{definition}
\newtheorem{exa}[thm]{Example}
\newtheorem{rem}[thm]{Remark}
\newtheorem{defn}[thm]{Definition}
\def\det{\mathop{\mathrm{det}}\nolimits}
\def\Hom{\mathop{\mathrm{Hom}}\nolimits}
\def\F{\mathop{\mathbb{F}}\nolimits}
\def\Fq{\mathop{\mathbb{F}_q}\nolimits}
\newcommand{\lra}{\longrightarrow}
\newcommand{\ra}{\rightarrow}
\newcommand{\N}{{\Bbb N}}
\newcommand{\Q}{{\Bbb Q}}
\newcommand{\Z}{{\Bbb Z}}
\newcommand{\C}{{\Bbb C}}
\newcommand{\SL}{{\rm SL}}
\newcommand{\GL}{{\rm GL}}
\newcommand{\Ker}{{\rm Ker}}
\newcommand{\Tr}{{\mathrm{Tr}}}
\newcommand{\Ind}{{\mathrm{Ind}}}
\begin{document}
\large

\begin{center}{\bf \Large The Dijkgraaf-Witten invariants from second Chern classes} \end{center}
\begin{center}{ Takefumi Nosaka
\footnote{
E-mail address: {\tt nosaka@math.titech.ac.jp}
}}\end{center}

% ?????????????classifying mapd\ add??????
\begin{abstract}\baselineskip=12pt \noindent
Given a 3-cocycle $\psi$ in the cohomology of a finite group $G$, we can define the Dijkgraaf-Witten invariant of closed 3-manifolds.
In this paper, we focus on the case where $\psi$ is a 3-cocycle systematically transferred from the second Chern class of a complex representation of $G$,
and show some procedures for computing the invariant, and clarify its topological interpretation under some conditions.
\end{abstract}
\begin{center}
\normalsize
\baselineskip=11pt
{\bf Keywords} \\
\ \ \ Group cohomology, 3-manifold, Chern classes, splitting principle \ \ \
\end{center}
\begin{center}
\normalsize
\baselineskip=11pt
{\bf Subject Codes } \\
20J06, \ 57K31, \ 19L10,\ 58J28, \ \ \
\end{center}

\large
\baselineskip=16pt
\section{Introduction}\label{S00}

Let $M$ be a connected oriented closed 3-manifold with % \red{an}
orientation 3-class $[M] \in H_3(M;\Z) \cong \Z$.
Let $G$ be a group, and $BG$ be an Eilenberg-MacLane space of type $(G,1)$. If $G=\pi_1(X)$, we can choose a classifying map $\iota : M \hookrightarrow B\pi_1(M)$, uniquely up to homotopy. %which is called}
Fix a group 3-cocycle $\psi \in H^3(BG; A)=H^3(G;A)$ with trivial coefficients. %trivial coefficient module $A$.
Then, for a group homomorphism $f: \pi_1(M) \ra G$, we can consider the pairing $\langle f^* (\psi),\iota_* [M] \rangle \in A$.
%Given and
As an important example, if $G$ is a Lie group with discrete topology and $\psi$ is a Chern-Simons 3-class, the pairing
is called {\it the Chern-Simons invariant} \cite{CS} (see \S \ref{SS8884} for the definition).
% with relation to hyperbolic volume.
Meanwhile, as another example, the Dijkgraaf-Witten invariant \cite[\S 6]{DW} is a toy model of the Chern-Simons invariant when $G$ is of finite order (See Appendix \ref{SS8884} for details).
Precisely, the invariant is defined as the following formal sum in the group ring $\Z[A]$:
\begin{equation}\label{seq75559}
\mathrm{DW}_{\psi }(M):= \sum_{f \in \Hom( \pi_1(M) , G )} 1_{\Z} \langle f^* (\psi), \iota_* [M] \rangle \in \Z[A]. \end{equation}
Here, $1_{\Z}$ is the unit of $\Z.$
The DW invariant seems simple; however, there are few examples of the resulting computations and studies of the invariant, unless $G$ is abelian or nilpotent.

To study such pairings and the invariant \eqref{seq75559} it is a problem to examine what kind of $\psi$ we should choose, % a group 3-cocycle $\psi$, and
and how we find an explicit expression of $\psi$ when $G$ is not abelian. %$in non-abelian cases on $G$.
Since the Chern-Simons 3-class is, roughly speaking, considered to be an inverse of a transgression of the second Chern classes of principle $\mathrm{GL}_n(\C)$-bundles (see, e.g., \cite{CS} or \cite[\S\S 3--5]{DW}), it is sensible to focus on a class of group 3-cocycles, which are like such inverses from second Chern classes of $G$ with $|G| < \infty$.
For this, if $|G| < \infty,$ notice the following isomorphisms obtained from the cohomology long exact sequence and the universal coefficient theorem: % among the group (co)-homology: % as follows:
\begin{equation}\label{seq4559} H^{n+1}( G;\Z ) \stackrel{\beta^{-1}}{\cong} H^n(G;\Q/\Z) \cong H_n(G;\Z) , \end{equation}
where $n\geq 1$ and the first isomorphism is the inverse of the Bockstein map induced from
\begin{equation}\label{seq79}
0 \lra \Z \xrightarrow{\ \textrm{inclusion} \ } \Q \lra \Q/ \Z \lra 0 \quad \mathrm{(exact)}. \end{equation}
In addition, as seen in \cite{Ev2,Th}, any complex representation $\rho: G \ra \mathrm{GL}_n( \C)$ uniquely admits the Chern class $c_{i}(\rho) \in H^{2i}( G;\Z ) $.
Thus, we have a cohomology 3-class of the form
\begin{equation}\label{lklk}\beta^{-1}( c_2(\rho)) \in H^3(G;\Q/ \Z). \end{equation}%

In this paper, as a finite-order model of the Chern-Simons 3-class, we consider the Dijkgraaf-Witten invariant \eqref{seq75559} with $\psi = 12 \beta^{-1}( c_2(\rho)) $.
First, as a result of a Riemann-Roch type theorem, we develop a formula for describing $ 12 \beta^{-1}( c_2(\rho)) $ (see Proposition \ref{prop4784}),
%show that
and show (Section \ref{SS3}) that, under a certain condition, the twelvefold invariant % \eqref{seq75559} %with $\psi = \beta^{-1}(c_2(\rho) ) $
can be computed from the cohomology rings of some covering spaces of $M$.
For example, the condition holds if $G= \SL_2(\Fq)$ with $q>10$; see Corollary \ref{ex373413} (see also Remark \ref{ex34244} for other groups).
In Section \ref{SS8884}, from a viewpoint of the Chern-Simons invariant, we suggest a procedure for computing the Dijkgraaf-Witten invariants of some Seifert 3-manifolds. %; see also Appendix \ref{SS48}.
Finally, in Section \ref{SS4} we observe other approaches to the invariants with $\psi = \beta^{-1}( c_2(\rho)) $. %, as a normalization of the 12--multiple.
%.......................

\

\noindent
{\bf Conventional notation.} \ Let $G$ be a finite group, and $M$ be a connected closed 3-manifold with orientation class $[M] \in H_3(M;\Z) \cong \Z$.
By $BG$ we mean an Eilenberg-MacLane space of type $(G,1)$.
%one of the fields $\Q,\R,\C$. By $\mathcal{L}$, we mean the free Lie algebra of rank $n \in \mathbb{N}$ over $\Q$.We denote the cyclic group $\Z/ |G|\Z$ by $\Q/\Z}$ for simplicity. %\Z$Let $\mu_{m} \in \C$ be $ \mathrm{exp}(2 \pi \sqrt{-1} /m)$ as a root of unity. % , and

%\section{Main theorems and applications}\label{S1}

%\begin{thm}\label{thm1}\end{thm}
%\begin{rem}\label{rrhm1} \end{rem}\noindent
%In terms of the base matrices in Example \ref{b3b7}, we introduce the natural logarithms of $\mathrm{Aut}_{\rm Lie}( \mathfrak{g})$:
%\begin{defn}\label{def2} \end{defn}

%we may focus on the $\Aut_{\rm Hopf} ( \hat{T}/\hat{T}_k ) $ instead of $ \Aut_{\rm Lie} ( \mathcal{L}/\mathcal{L}_k ) $.

%The above theorems can be summarized as a commutative diagram,
%\begin{equation}\label{seq3}
{% \xymatrix{0 \ar[r] & \Z \ar[rr]^{|G|\textrm{-times}} \ar@{=}[d] & & \Z \ar[rr] \ar@{^{(}-_>}[d] & & \Q/\Z} \ar[r] \ar@{^{(}-_>}[d]^{ \iota } &0 & (\mathrm{exact}) \\
%0 \ar[r] & \Z \ar@{^{(}-_>}[rr] & & \Q \ar[rr] & & \Q/ \Z \ar[r] &0 & (\mathrm{exact}) }}\end{equation}

%\begin{equation}\label{seq1331}\ln: \{ \Phi \in \mathrm{Aut}_{\rm Lie}( \mathcal{L}/\mathcal{L} _k \otimes \C ) \mid p_k(\Phi) \textrm{ is exponential solvable} \} \lra \mathrm{Der} ( \mathfrak{g}) ; \ \ \Phi \longmapsto \ln (\Phi). \end{equation}

\section{Twelvefold second Chern classes}\label{SS2}
Let $\rho : G \ra \mathrm{GL}_n(\C )$ be any complex representation,
and $c_i(\rho) \in H^{2i}(G ;\Z)$ be the Chern class of $\rho$; see, e.g., \cite{Ev2,Th} for the definition.
%For $m \in \N,$ consider the ring homomorphism $\mathcal{M}_{m} : \Z[\Z_{|G|] \ra \Z[\Z_{|G|] $ induced by the multiplication $\Q/\Z} \ra \Q/\Z} ; n \mapsto nm.$
The purpose of this section is to develop an algorithm to describe the twelvefold 3-class $12 \beta^{-1}(c_2(\rho)) .$
%concretely compute 12 times the pairing $12 \langle f^* (\beta^{-1}(c_2(\rho) ), [M] \rangle \in \Q/\Z}$; see Theorem \ref{prop4784}. % and Example \ref{}.

% s and covering spaces

% We now give the proof.

% As usual,
Let $c(\rho) \in H^*(G;\Z)$ be the total Chern class, that is, $c(\rho) = 1 +c_1(\rho)+ c_2(\rho) + \cdots + c_n(\rho).$
Let $R_{\mathbb{C}}(G)$ be the complex representation ring of $G$.
As is known as Brauer theorem, there are finitely many $n_i (\rho) \in \Z$ and subgroups $H_i \subset G$ with one--dimensional representation
$\phi_i:H_i \ra \mathrm{GL}_1(\C)$ such that
\begin{equation}\label{seq7712} \rho= \sum_{i} n_i (\rho)\mathrm{Ind}_{H_i}^G( \phi_i) \in R_{\C}(G).\end{equation}
Here, $\mathrm{Ind}_{H_i}^G( \phi_i) $ is the induced representation.
By the Whitney sum formula of $c(\rho)$, we have
\begin{equation}\label{seq771} c(\rho) = \prod (1 +c_1 ( \mathrm{Ind}_{H_i}^G( \phi_i)) + c_2( \mathrm{Ind}_{H_i}^G( \phi_i)) + \cdots )^{n_i(\rho)}\in H^*( G;\Z) \end{equation}
To summarize, to compute $c_2(\rho)$, we may focus only on $c_1 ( \mathrm{Ind}_{H_i}^G( \phi_i)) $ and $c_2( \mathrm{Ind}_{H_i}^G( \phi_i)) $.

Thus, we shall restrict ourselves to a subgroup $H \subset G$ and a one--dimensional representation
$\phi: H \ra \mathrm{GL}_1(\C) =\C^{\times}$, and show Lemma \ref{prop1} below.
Identify $\Q/\Z$ with the multiplicative subgroup $\{ \exp (2 \pi \sqrt{-1} r ) \mid r \in \Q \} \subset \C^{\times}$.
Since $|H| < \infty $, the image of $\phi$ is contained in $\Q/\Z$; we may regard $\phi$ as a 1-cocycle of $H$ with trivial coefficients.
%Thus $\phi$ factors through a homomorphism $\phi: H \ra \Q/\Z}$ as a 1-cocycle??, where $\phi(h)= \mu_{|G|}^{\phi(h)} $ for any $h \in H$.
Denote the transfer map by $\mathrm{Tr}^G_H: H^n (H;A) \ra H^n (G;A) $; see, e.g., \cite[\S III.9]{Br} for the definition.
Then, the following claims that the cohomology 3-class $12 \beta^{-1} \bigl( c_2( \mathrm{Ind}_{H}^G( \phi) )\bigr)$ can be computed from $\beta$ and the transfers.
\begin{lem}\label{prop1}
Take another subgroup $H' \subset G$ with a representation $\phi' : H' \ra \mathrm{GL}_1(\C)$.
Then, the following two identities hold:
\begin{equation}\label{s256}
2\beta^{-1} \bigl( c_1( \mathrm{Ind}_{H}^G( \phi) ) \smile c_1( \mathrm{Ind}_{H ' }^G(\phi' ) ) \bigr) = 2 \mathrm{Tr}^G_{H}( \phi ) \smile_{\Q/\Z} \beta \circ
\mathrm{Tr}^G_{H'}( \phi ' ) ,
\end{equation}
%\begin{equation}\label{s23}
%6 \bigl( \phi \smile \beta \circ \mathrm{Tr}^G_H( \phi ) - \mathrm{Tr}^G_H ( \phi \smile \beta ( [ \phi ]) ) \bigr) \in H^3(G;\Q/\Z}). \end{equation}
\begin{equation}\label{s23}
12 \beta^{-1} \bigl( c_2( \mathrm{Ind}_{H}^G( \phi) )\bigr) =
6\bigl( \mathrm{Tr}^G_{H}( \phi ) \smile_{\Q/\Z} \beta \circ \mathrm{Tr}^G_H( \phi ) - \mathrm{Tr}^G_H ( \phi \smile \beta ( \phi) ) \bigr) \in H^3(G;\Q/\Z). \end{equation}
Here, $\smile_{\Q/\Z}$ means the cup product with coefficients induced from the biadditive map $\Z \times \Q/\Z \ra \Q/\Z$ that sends $(n,[a])$ to $[an]$.
%\[\]
Furthermore, if $\phi$ extends to a 1-cocycle $G \ra \Q/\Z $, then % left hand side of
\eqref{s23} vanishes.
\end{lem}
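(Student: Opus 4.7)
The plan is to derive both formulas from Proposition~\ref{prop4784} (the Riemann-Roch-type identity) and then translate the resulting $\Z$-coefficient identities into $\Q/\Z$-coefficient ones via the Bockstein, which for $|G|<\infty$ is a genuine isomorphism $\beta^{-1}:H^{k+1}(G;\Z)\to H^{k}(G;\Q/\Z)$ for $k\geq1$ (all relevant groups being torsion). I use four staples throughout: $c_1(\phi)=\beta(\phi)$ when $\phi$ is viewed as a $\Q/\Z$-valued $1$-cocycle; naturality $\beta\circ\Tr^G_H=\Tr^G_H\circ\beta$; the Leibniz identity $\beta(a\smile_{\Q/\Z}b)=\beta(a)\smile b$ for $a\in H^*(G;\Q/\Z)$ and $b\in H^*(G;\Z)$; and the standard $\Tr^G_H\circ\mathrm{res}^G_H=[G:H]\cdot\mathrm{id}$.

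For~\eqref{s256}, Proposition~\ref{prop4784} gives the integer identity $c_1(\Ind_H^G(\phi))=\Tr^G_H(c_1(\phi))=\beta(\Tr^G_H(\phi))$. Cupping with the analogue for $(H',\phi')$ yields $\beta(\Tr(\phi))\smile\beta(\Tr(\phi'))$, which by the Leibniz identity equals $\beta\bigl(\Tr(\phi)\smile_{\Q/\Z}\beta(\Tr(\phi'))\bigr)$; applying $\beta^{-1}$ and doubling produces~\eqref{s256}.

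For~\eqref{s23}, Proposition~\ref{prop4784} provides the integer identity $12\,c_2(\Ind_H^G(\phi))=6\bigl(c_1(\Ind_H^G(\phi))^{2}-\Tr^G_H(c_1(\phi)^{2})\bigr)\in H^4(G;\Z)$. I would handle the first summand exactly as in~\eqref{s256}. For the second, the Leibniz identity gives $\beta(\phi\smile_{\Q/\Z}\beta(\phi))=\beta(\phi)\smile\beta(\phi)=c_1(\phi)^2$, so $\beta^{-1}(c_1(\phi)^2)=\phi\smile_{\Q/\Z}\beta(\phi)$; then naturality of $\beta$ gives $\beta^{-1}\bigl(\Tr(c_1(\phi)^{2})\bigr)=\Tr(\phi\smile_{\Q/\Z}\beta(\phi))$. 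Assembling these produces~\eqref{s23}.

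For the final vanishing, suppose $\phi=\mathrm{res}^G_H(\tilde\phi)$ for some $\tilde\phi:G\to\Q/\Z$. Since restriction is a ring map, $\phi\smile\beta(\phi)=\mathrm{res}(\tilde\phi\smile\beta(\tilde\phi))$, and combined with $\Tr\circ\mathrm{res}=[G:H]\cdot\mathrm{id}$ this reduces the right-hand side of~\eqref{s23} to $6[G:H]([G:H]-1)(\tilde\phi\smile\beta(\tilde\phi))$. Concluding vanishing is the main obstacle: this reduced expression is not a priori zero in $H^3(G;\Q/\Z)$, so I would attempt to finish either via the exponent bound $|G|\cdot H^*(G;\Q/\Z)=0$ after a divisibility computation, or by exploiting the projection-formula decomposition $\Ind_H^G(\mathrm{res}(\tilde\phi))\cong\tilde\phi\otimes\C[G/H]$ and computing $12\,c_2$ of this tensor product directly using the splitting of the permutation representation $\C[G/H]$.
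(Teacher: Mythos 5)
Your derivation of the two displayed identities follows essentially the same route as the paper: the integer inputs you invoke are precisely the integral Riemann--Roch identities \eqref{seq747}--\eqref{seq74799}, and the paper converts them exactly as you do, via $c_1(\phi)=\beta(\phi)$, $\Tr^G_H\circ\beta=\beta\circ\Tr^G_H$, and the module identity $\beta(u\smile_{\Q/\Z}v)=u\smile_{\Q/\Z}\beta(v)$. Two caveats on this part. First, attributing those identities to Proposition~\ref{prop4784} is circular, since in the paper that proposition is a consequence of this lemma; the correct source is the integral Riemann--Roch theorem \cite[Theorem 6.3]{Th}. Second, Riemann--Roch gives only $2c_1(\Ind_{H}^{G}(\phi))=2\Tr^G_H(c_1(\phi))$, not the unqualified equality you assert (the discrepancy is the $2$-torsion class $c_1(\Ind_H^G(1))$, cf.\ \eqref{t9921}); this is harmless here only because \eqref{s256} and \eqref{s23} carry even overall factors, but the factor of $2$ should be threaded through the argument explicitly.

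The genuine gap is the final vanishing claim, and your hesitation there is warranted. Your projection-formula reduction of the right-hand side of \eqref{s23} to $6[G:H]([G:H]-1)\,\tilde\phi\smile_{\Q/\Z}\beta(\tilde\phi)$ is exactly what the paper's one-line justification (``follows directly from the projection formula'') amounts to, but this class does not vanish in general, so neither of your proposed completions can close the gap: take $G=\Z/25$, $H=\Z/5$, and $\tilde\phi$ a faithful character of $G$, so that $\tilde\phi\smile_{\Q/\Z}\beta(\tilde\phi)$ generates $H^3(\Z/25;\Q/\Z)\cong\Z/25$ while $6\cdot 5\cdot 4=120\equiv 20\not\equiv 0 \pmod{25}$; a direct character computation confirms $c_2(\Ind_H^G(\phi))=10\,\beta(\tilde\phi)^{2}\neq 0$ and hence $12\beta^{-1}(c_2(\Ind_H^G(\phi)))\neq 0$. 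In other words, the obstacle you identified is a defect of the statement rather than of your argument: the ``furthermore'' clause needs an extra hypothesis (for instance that the order of $\tilde\phi\smile_{\Q/\Z}\beta(\tilde\phi)$ divides $6[G:H]([G:H]-1)$, as happens when $\tilde\phi$ is annihilated by $[G:H]$), and the exponent bound $|G|\cdot H^*(G;\Q/\Z)=0$ does not supply it.
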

\begin{proof}
Recall the integral Riemann-Roch theorem in group cohomology (see, e.g., \cite[Theorem 6.3]{Th}), which % immediately
partially\footnote{We here explain the partial deduction in details: In \cite[Theorem 6.3]{Th}, the theorem states that, for any $k \in \N$, there are $\overline{M}_k \in \N$ and a polynomial, $s_k$, of the Chern classes such that $ \overline{M}_k ( s_k(\mathrm{Ind}^G_H( \phi ))- \mathrm{Tr}^G_H (s_k ( \rho)) )=0\in H^{2k}(G;\Z)$. In lower degree, $ \overline{M}_1= 2$, $ \overline{M}_2=6$ and $s_1(\rho)=c_1(\rho)$, $s_2(\rho) = c_1(\rho)^2 -2c_2(\rho)$ are known; these mean \eqref{seq747} and \eqref{seq74799} exactly.
In particular, if $\rho$ is a one-dimensional representation $s_2(\rho)= c_1(\rho)^2$.} deduces
\begin{equation}\label{seq747}
12 c_2( \mathrm{Ind}_{H}^G( \phi) )=
6 c_1( \mathrm{Ind}_{H}^G( \phi) )^2 - 6 \mathrm{Tr}^G_H ( c_1( \phi)\smile c_1( \phi) ) \in H^4( G;\Z), \end{equation}
%and $
\begin{equation}\label{seq74799}
2 c_1( \mathrm{Ind}_{H}^G( \phi) )= 2 \mathrm{Tr}^G_H (c_1 ( \phi) )\in H^2( G;\Z).\end{equation}
Let us mention the identities $c_1(\phi)= \beta (\phi) \in H^2(H;\Z) $ from \cite[p. 68]{Th} and \begin{equation}\label{seq77}
\beta ( u \smile_{\Q/\Z} v) = u\smile_{\Q/\Z} \beta(v) , \quad \quad \mathrm{for \ any \ } u \in H^n( G;\Z ) , v\in H^m( G;\Q/\Z), \end{equation}
from \cite[V.3.(3.3)]{Br}. Since $\mathrm{Tr}_{H}^G \circ \beta = \beta \circ \mathrm{Tr}_{H}^G$ (see \cite[Exercise 4.2.4]{Ev1}), the right hand side of \eqref{seq747} is reduced to %\[6 \bigl( \beta ( \phi \smile c_1( \mathrm{Ind}_{H}^G( \phi) )) - \mathrm{Tr}^G_H (\beta( \phi \smile c_1( \phi) ) \bigr)=
%\[ \red{12 }
$6 \beta \bigl( \phi \smile_{\Q/\Z} c_1( \mathrm{Ind}_{H}^G( \phi) ) - \mathrm{Tr}^G_H ( \phi \smile_{\Q/\Z} c_1( \phi) ) \bigr).$ %.\]
By applying this to \eqref{seq77}, we readily obtain \eqref{s23} as desired.
In addition, we immediately obtain \eqref{s256} from \eqref{seq74799} and \eqref{seq77}.
Finally, the last claim follows directly from the projection formula; see, e.g., \cite[V.(3.8)]{Br}.
%& \]
\end{proof}

%ring of $$
To summarize, by \eqref{seq771}, %the Whitney sum formula on Chern classes},
we readily reach the following conclusion:
\begin{prop}\label{prop4784}
For a representation $\rho : G \ra \mathrm{GL}_n(\C )$, we suppose \eqref{seq7712}.
Then,
\begin{equation}\label{seq77111} 12 (\beta^{-1}(c_2(\rho) )=6 \sum_{k=1}^m %$ in \eqref{seq75559} is equal to .
n_k(\rho) \Bigl( \bigl( \sum_{j=1}^m n_j (\rho) \mathrm{Tr}^G_{H_j}( \phi_j ) \smile \beta \circ
\mathrm{Tr}^G_{H_k}( \phi_k ) \bigr) - \mathrm{Tr}^G_{H_k} ( \phi_k \smile \beta ( \phi_k) ) \Bigr) . \end{equation}
%For any homomorphism $f: \pi_1(M) \ra G$, there are
% 1-,2-cocycles $ ( d_k, e_k ) \in H^1\oplus H^2(M;\Q/\Z)$,
%finitely many coverings $\phi_{f, \phi_{k}} : M_{f, \phi_k}\ra M$ of finite degree and homomorphisms $h_{ f,j}: \pi_1(M_{f,j} ) \ra \Q/\Z$ such that
%6 times the pairing
%\begin{equation}\label{seq77111}
%\begin{eqnarray}12 \langle f^* (\beta^{-1}(c_2(\rho) ), [M] \rangle &=& 3 \sum_{k=1}^m \sum_{j=1}^m %$ in \eqref{seq75559} is equal to .
%n_k(\rho) n_j(\rho) \langle \mathrm{Tr}^G_{H_j}( \phi_j ) \smile \beta \circ\mathrm{Tr}^G_{H_k}( \phi_k ), [M] \rangle \notag \\\label{a1}
%& & - 6\sum_{k=1}^m n_k(\rho) %$ in \eqref{seq75559} is equal to .\langle \phi_k \smile \beta ( \phi_k), (f \circ \pi_{f, \phi_k})_* [M_{f, \phi_k}] \rangle \in \Q/\Z.\end{eqnarray}% \end{equation}
%Here, $ h_{f,j}$ is regarded as a 1-cocycle in $H^1(M_{f,j} ; \Q/\Z)= \Hom( \pi_1(M_{f,j}) ; \Q/\Z )$.
\end{prop}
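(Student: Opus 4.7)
The plan is to combine the multiplicative Whitney formula \eqref{seq771} with Lemma \ref{prop1} applied piecewise. Writing $a_i := c_1(\mathrm{Ind}_{H_i}^G(\phi_i))$ and $b_i := c_2(\mathrm{Ind}_{H_i}^G(\phi_i))$ for brevity, the first step is to extract the degree-four component of $\prod_i(1+a_i+b_i+\cdots)^{n_i(\rho)}$. A short binomial computation, combined with $c_1(\rho) = \sum_i n_i(\rho) a_i$, should yield
\[
c_2(\rho) \;=\; \sum_i n_i(\rho)\,b_i \;+\; \tfrac{1}{2}\,c_1(\rho)^2 \;-\; \tfrac{1}{2}\sum_i n_i(\rho)\,a_i^2 .
\]
Multiplying by $12$ and applying $\beta^{-1}$ then splits the target $12\beta^{-1}(c_2(\rho))$ into three packets: $12\sum_i n_i(\rho)\,\beta^{-1}(b_i)$, $6\,\beta^{-1}(c_1(\rho)^2)$, and $-6\sum_i n_i(\rho)\,\beta^{-1}(a_i^2)$.

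Next, I would invoke Lemma \ref{prop1} on each packet. Substituting \eqref{s23} into the first packet rewrites each $12\beta^{-1}(b_i)$ as $6\,\mathrm{Tr}^G_{H_i}(\phi_i)\smile \beta\circ\mathrm{Tr}^G_{H_i}(\phi_i) - 6\,\mathrm{Tr}^G_{H_i}(\phi_i\smile \beta(\phi_i))$. For the other two packets I would expand $c_1(\rho)^2 = \sum_{j,k} n_j(\rho) n_k(\rho)\,a_j a_k$ and apply \eqref{s256} pair by pair (with the special case $H=H'$, $\phi=\phi'$ covering the $a_i^2$ terms). Assembling the three packets, the diagonal contribution $6\sum_i n_i(\rho)\,\mathrm{Tr}^G_{H_i}(\phi_i)\smile \beta\circ\mathrm{Tr}^G_{H_i}(\phi_i)$ produced by the first packet is exactly cancelled by the mirror image coming from the third, while the second packet supplies the full double sum over $(j,k)$ appearing in \eqref{seq77111}. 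The correction $-6\sum_k n_k(\rho)\,\mathrm{Tr}^G_{H_k}(\phi_k\smile \beta(\phi_k))$ surviving from the first packet then matches the remaining single-sum term.

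The one subtle point is that \eqref{s256} equates \emph{twice} the two sides rather than the sides themselves, so a priori $\beta^{-1}(a_j a_k)$ differs from $\mathrm{Tr}^G_{H_j}(\phi_j)\smile \beta\circ \mathrm{Tr}^G_{H_k}(\phi_k)$ by a class $\xi \in H^3(G;\Q/\Z)$ with $2\xi = 0$. This ambiguity is harmless once an outer coefficient of $6$ is present, since $2\xi = 0$ implies $6\xi = 0$. Apart from this consistency check, the remainder of the argument is algebraic bookkeeping; the main conceptual step beyond Lemma \ref{prop1} itself is the degree-four extraction from the Whitney product, which converts the multiplicative formula \eqref{seq771} into a linear combination to which Lemma \ref{prop1} applies directly.
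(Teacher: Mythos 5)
Your proposal is correct and follows exactly the route the paper intends: the paper derives \eqref{seq77111} by combining the Whitney-sum expansion \eqref{seq771} with the two identities of Lemma \ref{prop1}, leaving the degree-four extraction and the cancellation of the diagonal terms as "readily" checked bookkeeping, which you have simply written out (including the correct observation that the $2$-torsion ambiguity in \eqref{s256} is killed by the outer factor of $6$). No gap; your argument is a fleshed-out version of the paper's one-line proof.
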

\begin{rem}\label{prop084}
Let us make a remark on a topological meaning of the first term in \eqref{seq77111}.
In general, for any $1$-cocycles $\bar{\phi}: G \ra \Q/\Z$ as in $ \mathrm{Tr}^G_{H_j} ( \phi_j ) $ and any homomorphism $f: \pi_1(M)\ra G $, we regard $\bar{\phi} \circ f $ as first cohomology classes of $M$ via $H^1(M;\Q/\Z )= \Hom ( \pi_1(M), \Q/\Z)$.
Hence, the pairing $ \langle f^* (\psi), [M] \rangle $ with $\psi =\phi_j \smile \beta( \phi_k )$ can be computed from the cohomology ring $H^*(M;\Q/\Z)$ and the Bockstein maps (i.e., the linking form) of $M$.
In addition, %as seen in \cite{MOO}, the pairing is also computed from the linking form of $M$ or the $U(1)$-quantum invariant Incidentally,
the pairing can be computed in several ways: for example, see \cite{MOO} for such computations from some quasitriangular Hopf algebras, branched coverings, or Kirby diagrams.
Meanwhile, the second term in \eqref{seq77111} will be topologically examined in Section \ref{SS3}.
% \begin{equation}\label{seq78111}
%. \end{equation}
\end{rem}
% \section{}\label{SS3}

% As an example, consider $G = \mathrm{SL}_2(\F_q)$.

%
By Lemma \ref{prop1}, it is reasonable to % we should
describe % review
explicit formula of $\beta$ and $ \mathrm{Tr}^G_H $
in the non-homogeneous complex of $G$.
First, from the definition of the Bockstein map, $ \beta(\phi):H \times H \ra \Z $ is represented by
\begin{equation}\label{t0099}\beta(\phi) ( g,h) := \left\{
\begin{array}{ll}
1 ,& \mathrm{if} \quad \overline{ \phi(g)} + \overline{ \phi(h)} \geq 1, \\
0 ,& \mathrm{if} \quad \overline{ \phi(g)} + \overline{ \phi(h)} <1.
\end{array}\right. \end{equation}
Here, for $r \in \Q/\Z $, we uniquely choose the representative $\overline{r} \in \Q \cap [0,1 )$. % such that $ $.
%\[ G \times G \lra \Z/|G|; \quad \quad (x,y) \mapsto \]
Next, let us choose a complete set of representatives of the left cosets $G/H$, $T:= \{ g_i \}_{i=1}^{|G/H|} \subset G$, such that $T \cap H=\{1_G\}.$
For $\sigma \in G$, we fix notation
\begin{equation}\label{t009} \bar{\sigma}:= H \sigma \cap T, \quad \quad \widetilde{\sigma}:= \sigma (\bar{\sigma}) \in H. \notag \end{equation}
Then,
according to \cite[Exercise II.10.2]{Br}, $ \mathrm{Tr}^G_H (\psi ): G \ra \Q/\Z$ is represented by a correspondence
\begin{equation}\label{t008} \sigma \longmapsto \sum_{i: 1 \leq i \leq |G/H| } \phi (g_i \sigma \overline{g_i \sigma}^{-1} )/|G| . \end{equation}
%where $E$ is a set of representatives for the right cosets $Hf$ and $\bar{x}$ is the representative of $Hx.$
In addition, for a 3-cocycle $\psi : H^3 \ra A$, the transfer $\Tr_H^G(\psi )$ is known to be represented as a map $G^3 \ra A$ defined by setting
\begin{equation}\label{t007}(\sigma_1, \sigma_2, \sigma_3) \longmapsto \sum_{i: 1 \leq i \leq |G/H| } \psi ( \widetilde{\bar{g_i} \sigma_1}, \widetilde{\overline{g_i \sigma_1 } \sigma_2},
\widetilde{\overline{g_i \sigma_1\sigma_2 } \sigma_3} ). \end{equation}
In summary, combing the above formulae \eqref{t0099}--\eqref{t007} with Lemma \ref{prop1}, we can concretely obtain a representative 3-cocycle of $12 \beta^{-1}( c_2(\Ind_H^G \phi ))$ as a map $G^3 \ra \Q/\Z$.
In particular, if every 4-cocycle $C \in H^{4}(G;\Z)$ admits a representation $\rho: G \ra \mathrm{GL}_n(\C)$ such that $12 C=12 c_2(\rho)$, then we can obtain a representative $G^3 \ra \Q/\Z$ of twelvefold every 3-cocycle of $G$.
For example, if $G$ has a periodic cohomology, $G$ satisfies this condition;
see \cite{Th} for the detail and other examples, in comparison with studies of the Chern subring $\mathrm{CH}^*(G) \subset H^{\rm even}(G;\Z)$,
where $\mathrm{CH}^*(G)$ is defined to be the subring generated by the Chern classes of all complex representations of $G$.

% as in \cite[\S VI.9]{Bro} or if

\section{Finite groups of type $C_{m}$ and covering spaces }\label{SS3}
In this section, we will introduce a class of finite groups (Definition \ref{ex461}), and show (Theorem \ref{ex37341}) that,
if $G$ lies in the class, some multiples of the Dijkgraaf-Witten invariant of $M$ can be computed from
the cohomology rings of some finite covering spaces of $M$.
To this end, let us prepare the following lemmas:
\begin{lem}\label{ex00373491}
For a surjective homomorphism $f: \pi_1(M) \ra G$ and a subgroup $j: H \subset G$,
let $\pi : M_{f, H } \ra M$ be the finite covering associated with the subgroup $f^{-1}(H) \subset \pi_1(M)$.
Let $m$ be the greatest common divisor of $|H|$ and $|G/H|$, i.e., $m= \mathrm{g.c.d}( |H|, |G/H|)$.
Suppose $ \bar{\psi} \in H^3( G;\Q/\Z)$ and $ \psi \in H^3( H;\Q/\Z)$ such that $j^* (\bar{\psi} )= m \psi $.
% \psi \in \Ker(\mathrm{Tr}^G_{H} )$.
%and that $|G/H|$ is divisible by $m$.
Then,
\begin{equation}\label{s2454} m^2 \langle \mathrm{Tr}^G_{H} (\psi ) , f_* \circ \iota_* ([M]) \rangle = m^2 \langle \psi , \mathrm{res} \circ \iota_* (f)_*([M_{f,H}]) \rangle \in \Q/\Z .
\end{equation}
Here, the fundamental 3-class $[M_{f,H}]$ is canonically defined from $[M]$ via the covering $M_{f,H}\ra M$.
In particular, if $\psi =\phi_k \smile \beta ( \phi_k) $ as in the second term in \eqref{seq77111}, then the pairing \eqref{s2454} is computed from the cup product of the space $M_{f,H}$.
\end{lem}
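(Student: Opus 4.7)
The plan is to use the hypothesis $j^*\bar\psi=m\psi$ to rewrite both sides of \eqref{s2454} as the same multiple of the pairing $\langle\bar\psi,f_*\iota_*[M]\rangle$. Two classical inputs will serve as bridges: the transfer--restriction identity $\Tr^G_H\circ j^*=[G:H]\cdot\mathrm{id}$ on $H^*(G)$ (see, e.g., \cite[III.9]{Br}), and the degree formula $\pi_*[M_{f,H}]=[G:H]\cdot[M]$ for the $[G:H]$-fold oriented covering $\pi:M_{f,H}\to M$.

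For the left-hand side, I would apply $\Tr^G_H$ to $j^*\bar\psi=m\psi$ and invoke the first bridge to obtain $m\,\Tr^G_H(\psi)=[G:H]\,\bar\psi$ in $H^3(G;\Q/\Z)$. Pairing with $f_*\iota_*[M]$ and multiplying by $m$ then yields
\[
m^2\langle\Tr^G_H(\psi),f_*\iota_*[M]\rangle \;=\; m[G:H]\,\langle\bar\psi,f_*\iota_*[M]\rangle .
\]

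For the right-hand side, write the pairing as $m^2\langle\psi,\tilde f_*\iota'_*[M_{f,H}]\rangle$, where $\tilde f:f^{-1}(H)\to H$ denotes the restriction of $f$ and $\iota':M_{f,H}\to Bf^{-1}(H)$ is a classifying map. Using $m\psi=j^*\bar\psi$ and adjointness,
\[
m\langle\psi,\tilde f_*\iota'_*[M_{f,H}]\rangle \;=\; \langle\bar\psi,(j\circ\tilde f)_*\iota'_*[M_{f,H}]\rangle \;=\; \langle\bar\psi,f_*i_*\iota'_*[M_{f,H}]\rangle ,
\]
since $j\circ\tilde f=f\circ i$ with $i:f^{-1}(H)\hookrightarrow\pi_1(M)$ the inclusion. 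Because $\pi_1(M_{f,H})=f^{-1}(H)$ by construction of the cover, $\iota'$ may be chosen so that $Bi\circ\iota'\simeq\iota\circ\pi$; consequently $i_*\iota'_*=\iota_*\pi_*$, and the second bridge collapses the right side to $[G:H]\langle\bar\psi,f_*\iota_*[M]\rangle$. Multiplying by $m$ matches the left-hand side.

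The closing assertion is essentially formal: for $\psi=\phi_k\smile\beta(\phi_k)$, the pullback $\tilde\phi_k:=(\tilde f\circ\iota')^*\phi_k$ lies in $H^1(M_{f,H};\Q/\Z)$, so the right-hand pairing of \eqref{s2454} becomes $\langle\tilde\phi_k\smile\beta(\tilde\phi_k),[M_{f,H}]\rangle$, which is manifestly read off from the cup product of $H^*(M_{f,H};\Q/\Z)$ together with the Bockstein (linking form) of $M_{f,H}$. The only non-routine step is verifying the homotopy-commutativity $Bi\circ\iota'\simeq\iota\circ\pi$, i.e.\ organizing the classifying-space diagrams so that the different pushforwards of $[M_{f,H}]$ coincide; otherwise every manipulation above is just formal bookkeeping with transfers, adjointness, and degrees of covers.
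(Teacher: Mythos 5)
Your argument is correct, and it uses exactly the same ingredients as the paper's proof: the identity $\Tr^G_H\circ j^*=[G:H]\cdot\mathrm{id}$, the degree formula $\pi_*[M_{f,H}]=|G/H|\,[M]$, adjointness of $j^*$ and $j_*$, the compatibility $f\circ i=j\circ\mathrm{res}(f)$ of the restricted homomorphism with the covering, and of course the hypothesis $j^*(\bar\psi)=m\psi$. The one organizational difference is worth pointing out. The paper transforms the left side into the right side directly, and to do so it writes $|G/H|=ms$ and divides by $s$, justified by the claim that one can choose $s$ with $(s,|H|)=1$; since $\Tr^G_H(\psi)$ is annihilated by $|H|$, such an $s$ would indeed be invertible on the relevant values. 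You instead show that \emph{both} sides equal the common quantity $m\,[G:H]\,\langle\bar\psi,f_*\iota_*[M]\rangle$, which requires no division at all. This is a genuine (if small) improvement: the factorization $|G/H|=ms$ with $s$ prime to $|H|$ is not always available when $m=\mathrm{g.c.d.}(|H|,|G/H|)$ --- e.g.\ $|H|=4$, $|G/H|=8$ forces $s=2$ --- so your rearrangement closes a gap that the paper's bookkeeping leaves open while proving the same equality. Your identification of $Bi\circ\iota'\simeq\iota\circ\pi$ as the only non-formal step is also accurate; it is standard covering-space theory given that $\pi_1(M_{f,H})=f^{-1}(H)$ and that $f$ is surjective (which is what makes the covering degree equal to $|G/H|$).
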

\begin{proof}
Choose $s \in \N$ such that $ |G/H|= ms $ and $(s, |H|)=1$. Since $H^*( H;\Q/\Z) $ is annihilated by $|H|$, %well-known,
$s$ is invertible in computing the pairing on the cohomology of $H$. Since $ \pi_*([M_{f,H} ] )=|G/H|[M]$ by the definition of $[M_{f,H}]$, we have %the proof is due to the computation
%Since $f^{-1}(H_i)$ is of finite index, we can define the transfer $\mathrm{Tr}_{ f^{-1}(H_i)}^{\pi_1(M)} [M]$, which
%s equal to the fundamental class of $M_{H_i } $.
%In general, the following decomposition is known (see the proof of
% Thus, by Lemma \ref{ex37341}, notice
%As is implicated in \cite[Proposition 4.2.5]{Ev1}), %: % that ^2 Indeed,
%let $ $
%\[m H^n (H_i;\Q/\Z) \cong m\Ker(\Tr_{H_i}^G)\oplus m \mathrm{Im}\bigl(\iota_i^*:H^n (G;\Q/\Z ) \ra H^n (H_i;\Q/\Z)\bigr) .\]
%Thus, , Notice
%is relatively prime to $|G/H_i|$, then
\begin{equation}\label{s24433}m^2 \langle \mathrm{Tr}^G_{H} ( \psi) ,f_* \circ \iota_* ([M])\rangle =
s^{-1}
\langle \mathrm{Tr}^G_{H} ( m \psi) ,f_* \circ \iota_* (|G/H| [M])\rangle =
s^{-1}
\langle \mathrm{Tr}^G_{H} (j^*( \bar{\psi})) ,f_* \circ \pi _* \circ \iota_* ([M_{f,H }])\rangle %\notag
\end{equation}
Let $\mathrm{res}(f)$ be the restriction of $f$ on $\pi_1(M_{f,H})$.
%\begin{equation}\label{s24433}\end{equation}
By $f \circ \pi_* = j_* \circ \mathrm{res}(f)$ % \mid_{\pi_1(M_{f,H}} )} )$
and the familiar identity $ \mathrm{Tr}^G_{H} (j^*( \bar{\psi})) =|G/H| \bar{\psi} $, \eqref{s24433} is reduced to
\begin{equation}\label{s2464}
%s^{-1} }
m \langle \bar{\psi} ,j_* \circ \mathrm{res}(f)_* \circ \iota_* ([M_{f,H }]) \rangle=m \langle j^*(\bar{\psi}) , \mathrm{res}(f)_* \circ \iota_* ([M_{f,H }]) \rangle \notag \end{equation}
%\notag \]
%\begin{equation}\notag \end{equation}
\begin{equation}\label{s2464}
= m^2 \langle \psi , \mathrm{res}(f)_* \circ \iota_* ([M_{f,H }]) \rangle \in \Q/\Z. \end{equation}% \notag
% \end{equation}
%Since $\mathrm{g.c.d.}(|G/H|, |H|)$ is divisible by $m$,
This computation means \eqref{s2454} exactly.
\end{proof}
\noindent We remark that the surjectivity of $f$ is not so important; indeed, if $f$ is not surjective, we may replace $H \subset G$ as $ H \cap \mathrm{Im}(f) 
\subset G \cap \mathrm{Im}(f)$.

We also give a sufficient condition for the assumption in Lemma \ref{ex00373491}:
\begin{lem}\label{ex3734199} % Let $G$ be a finite group of type $(n,m)$.
Suppose an inclusion $j: H \hookrightarrow G$, and let $m \in \N$ be $ \mathrm{g.c.d.}( |H|,|G/H|)$ as above. % is divisible by $m$. % be the inclusion.
Then, for any $\psi \in H^{n}(H;\Q/\Z)$, there exists $\bar{\psi} \in H^{n}(G ;\Q/\Z)$ such that $j^*( \bar{\psi }) - m \psi \in \Ker(\mathrm{Tr}^G_{H} ) $. In particular, if $\mathrm{Tr}^G_{H} $ is injective, $j^*( \bar{\psi }) = m \psi $.
\end{lem}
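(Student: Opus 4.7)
The plan is to construct $\bar\psi$ explicitly as a scalar multiple of $\Tr^G_H(\psi)$, and to verify the required relation via the standard identity $\Tr^G_H \circ j^* = |G/H|\cdot \mathrm{id}_{H^*(G;A)}$ (see, e.g., \cite[\S III.9]{Br}) together with an elementary arithmetic manipulation.

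First, I would rewrite the conclusion in an equivalent form. Applying $\Tr^G_H$ to both sides, the condition $j^*(\bar\psi) - m\psi \in \Ker(\Tr^G_H)$ becomes $\Tr^G_H(j^*(\bar\psi)) = m\,\Tr^G_H(\psi)$, which by the above identity is just $|G/H|\,\bar\psi = m\,\Tr^G_H(\psi)$ in $H^n(G;\Q/\Z)$. Writing $|G/H|=ms$ with $s$ coprime to $|H|$ (exactly as in the proof of Lemma \ref{ex00373491}), it therefore suffices to find $\bar\psi$ satisfying $s\,\bar\psi = \Tr^G_H(\psi)$.

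Next, I would note that for $n\geq 1$ the group $H^n(H;\Q/\Z)$ is annihilated by $|H|$, and hence so is $\Tr^G_H(\psi) \in H^n(G;\Q/\Z)$ by $\Z$-linearity of the transfer. Since $s$ and $|H|$ are coprime, there exists $t \in \Z$ with $st \equiv 1 \pmod{|H|}$, and I would define $\bar\psi := t\,\Tr^G_H(\psi) \in H^n(G;\Q/\Z)$. Then $s\bar\psi = (st)\,\Tr^G_H(\psi) = \Tr^G_H(\psi)$, because $(st-1)$ is a multiple of $|H|$ and $\Tr^G_H(\psi)$ is $|H|$-torsion. Multiplying by $m$ yields the required identity $|G/H|\,\bar\psi = m\,\Tr^G_H(\psi)$, and the final assertion about injectivity of $\Tr^G_H$ is then immediate.

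The argument is essentially arithmetic and presents no serious obstacle; the only point worth highlighting is the $|H|$-torsion nature of the image of $\Tr^G_H$, which is precisely what permits $s$ to be inverted modulo $|H|$ and hence lets one absorb the factor of $s$ into the scalar defining $\bar\psi$.
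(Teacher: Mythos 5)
Your proposal is correct and takes essentially the same route as the paper: the paper likewise writes $|G/H|=ms$ with $(s,|H|)=1$, sets $\bar{\psi}=s^{-1}\Tr_H^G(\psi)$ (your $t\,\Tr_H^G(\psi)$ with $st\equiv 1 \bmod |H|$, justified by the $|H|$-torsion of the transfer image), and concludes from the identity $\Tr_H^G(j^*(x))=|G/H|\,x$. The only difference is presentational: you first restate the conclusion as $|G/H|\,\bar{\psi}=m\,\Tr_H^G(\psi)$ before solving for $\bar{\psi}$, whereas the paper defines $\bar{\psi}$ directly and then verifies the kernel condition.
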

\begin{proof} Choose $s \in \N$ such that $ |G/H|= ms $ and $(s, |H|)=1$.
Since $H^{n}(H;\Q/\Z) $ is annihilated by $|H|$, we can define $\bar{\psi }$ to be $s^{-1} \mathrm{Tr}_{H}^G ( \psi) $. %Recall from
By the familiar equality $ |G/H|x =\mathrm{Tr}_{H}^G (j^* ( x)) $ for any $x \in H^{n}(G;\Q/\Z)$,
we have $ \mathrm{Tr}_{H}^G(j^*( \bar{\psi }) - m \psi )=0$ as required.
\end{proof}
In general, $\mathrm{Tr}^G_{H} $ is far from being injective; to solve non-injectivity, it is sensible to consider a class of finite groups satisfying the condition in Lemma \ref{ex00373491} as follows:
%From these lemmas, it seems sensible to consider the following definition:
%Such a class is defined as follows: Sy
\begin{defn}\label{ex461}
For $m \in \Z$, we say a finite group $G$ to be {\it of type $C_{m}$}, if there are finitely many % $n_i \in \Z$ with $ 1 \leq i \leq k$ and
subgroups $j_i: H_i \subset G$ with one--dimensional representation
$\phi_i:H_i \ra \mathrm{GL}_1(\C)$ such that
\begin{enumerate}[(i)]
\item
% Recall the Chern subring $\mathrm{CH}^*(G;\Z) explained in \S \ref{}
% Let $$
%The Chern subring $\mathrm{CH}^*(G)$ includes the $m$-multiplication of the $2n $-th cohomology $m H^{2n}(G;\Z)$, that is,$\mathrm{CH}^*(G) \supset $ }
The subring of $H^*(G;\Z)$ generated by the total Chern classes $c(\mathrm{Ind}_{H_j}^G( \phi_i) )$ with all $ i$ includes
the $m$--multiplication of the forth cohomology $m H^{4}(G;\Z)$. % is generated by the Chern classes $c_2 (\mathrm{Ind}_{H_i}^G( \phi_i) )$ and $c_1 (\mathrm{Ind}_{H_j}^G( \phi_j) ) \smile c_1 (\mathrm{Ind}_{H_k}^G( \phi_k) )$, where the indexes run over $ 1\leq i,j,k \leq n.$
\item For any $i$, g.c.d($|H_i|$, $|G/H_i|$) is divisible by $m$, and there is $\kappa_i \in H^3(G;\Z)$ such that
$ j_i^*( \kappa_i)=m \phi_i \smile_{\Q/\Z} \beta (\phi_i)$.
% ???and the image of the restriction $j_i^*$ contains $m H^{2n}(H_i;\Z)$.
%, and there are $ \bar{\phi}_i \in H^{3}(G;\Q/\Z)$ such that $ j_i^*( \bar{\phi}_i ) =m \beta^{-1}( c_2 (\mathrm{Ind}_{H_i}^G( \phi_i) )) .$
% cokernel of the transfer $ \mathrm{Tr}_{H_i}^G: H^{2n}(H_i;\Z) \ra H^{2n}(G;\Z)$ is annihilated by $m$. % injective.
\end{enumerate}
\end{defn}
Before giving such examples and their properties, we state Theorem \ref{ex37341} below.
If $\psi = \phi_k \smile \beta ( \phi_k) $, the left hand side of \eqref{s2454} is equal to the $m^2$-times of the second term in \eqref{seq77111}.
As mentioned in Remark \ref{prop084},
the right hand side of \eqref{s2454} can be computed from
the linking form of the covering space $M_{f,H_i} $.
If $G$ is of type $C_{m}$, the $m$--multiple of any cohomology 3-class $\psi$ is equal to a certain sum of the forms $\beta^{-1}( c_2 (\mathrm{Ind}_{H_i}^G( \phi_i) )) $, and satisfies the assumption of Lemma \ref{ex00373491}. % and \ref{ex3734199}.
In conclusion, we have a summary:
\begin{thm}\label{ex37341} Let $G$ be a finite group of type $C_{m}$.
Let $m' \in \mathbb{N}$ be $\mathrm{l.c.d.}(12,m^2)$. Then,
for any cohomology 3-class $\lambda \in H^3(G;\Q/\Z)$, the DW invariant \eqref{seq75559} with $\psi = m' \lambda $ can be computed from
the set $ \Hom(\pi_1(M),G)$ and the linking forms of the covering spaces $M_{f,H_i } $, where $f$ runs over $ \Hom(\pi_1(M),G) $.
\end{thm}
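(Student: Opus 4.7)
The plan is to combine the Chern-subring decomposition provided by Definition~\ref{ex461}(i) with the Riemann--Roch identities of Lemma~\ref{prop1} and the covering-space identity of Lemma~\ref{ex00373491}. The multiplier $m' = \mathrm{l.c.d.}(12, m^2)$ is exactly what is needed to absorb both the factor $12$ appearing in \eqref{s23}--\eqref{s256} and the factor $m^2$ appearing in Lemma~\ref{ex00373491}.

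In detail, first identify $\lambda$ with $\beta(\lambda) \in H^4(G;\Z)$ via the isomorphism \eqref{seq4559}. By Definition~\ref{ex461}(i), there exist integers $a_i, b_{ij}$ such that
\[
m \beta(\lambda) = \sum_i a_i\, c_2(\Ind_{H_i}^G \phi_i) + \sum_{i,j} b_{ij}\, c_1(\Ind_{H_i}^G \phi_i) \smile c_1(\Ind_{H_j}^G \phi_j).
\]
Multiplying this through by $m'/m \in \Z$ and invoking the formulas \eqref{s23} and \eqref{s256} of Lemma~\ref{prop1} (which require extra multiplications by $12$ and by $2$, both of which divide $m'$) expresses $m'\lambda$ as a $\Z$-linear combination of two kinds of transfer cocycles in $H^3(G;\Q/\Z)$: ``cup-type'' terms $\Tr^G_{H_i}(\phi_i) \smile_{\Q/\Z} \beta \circ \Tr^G_{H_j}(\phi_j)$ and ``self-transfer'' terms $\Tr^G_{H_k}(\phi_k \smile \beta(\phi_k))$. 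Pairing against $\iota_*[M]$, Remark~\ref{prop084} shows that each cup-type term is computable from the cup product and Bockstein on $H^*(M;\Q/\Z)$. For each self-transfer term, Definition~\ref{ex461}(ii) supplies $\kappa_k \in H^3(G;\Z)$ with $j_k^*(\kappa_k) = m\, \phi_k \smile \beta(\phi_k)$; rescaling $\kappa_k$ by the integer $\gcd(|H_k|,|G/H_k|)/m$ yields a $\bar{\psi}$ meeting the hypothesis of Lemma~\ref{ex00373491}, so that
\[
m^2 \langle \Tr^G_{H_k}(\phi_k \smile \beta \phi_k),\, f_* \iota_*[M]\rangle = m^2 \langle \phi_k \smile \beta \phi_k,\, \mathrm{res}(f)_* \iota_*[M_{f, H_k}]\rangle,
\]
and the right-hand side is again a cup-product-plus-Bockstein calculation, this time on the covering space $M_{f, H_k}$. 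Summing $1_\Z$ times the resulting pairings over $f \in \Hom(\pi_1(M), G)$ then yields $\mathrm{DW}_{m'\lambda}(M)$.

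The main obstacle will be the scalar bookkeeping: one must verify that both multiplicative constraints -- the factor $12$ needed to apply Lemma~\ref{prop1} to $\beta^{-1}(c_2)$, and the factor $m^2$ needed to apply Lemma~\ref{ex00373491} to each self-transfer term -- are simultaneously met by $m' = \mathrm{l.c.d.}(12, m^2)$, and no larger multiplier such as $12 m^2$ is actually needed. A subsidiary care is the mild mismatch between the ``$m$'' of Definition~\ref{ex461}(ii) and the ``$m$'' ($= \gcd(|H_k|,|G/H_k|)$) appearing in Lemma~\ref{ex00373491}, resolved by the rescaling of $\kappa_k$ noted above.
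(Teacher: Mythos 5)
Your argument follows essentially the same route as the paper's: use Definition~\ref{ex461}(i) to write $m\beta(\lambda)$ as a polynomial in the classes $c_1(\Ind_{H_i}^G\phi_i)$ and $c_2(\Ind_{H_i}^G\phi_i)$, convert via Lemma~\ref{prop1} into cup-type terms (evaluated on $M$ itself as in Remark~\ref{prop084}) and self-transfer terms (evaluated on the covers $M_{f,H_i}$ via Lemma~\ref{ex00373491} together with Definition~\ref{ex461}(ii)), and absorb the factors $12$ and $m^2$ into $m'$. The one imprecision is that Lemma~\ref{ex00373491} applied to $H_k$ yields the identity with $\mathrm{g.c.d.}(|H_k|,|G/H_k|)^2$ rather than the global $m^2$ appearing in your display, but the paper's own proof sketch elides exactly the same point, so this is not a deviation from its argument.
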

\begin{rem}\label{s2274} The assumption of type $C_{m}$ seems necessary because of unnaturality of the transfer.
To be precise, for any homomorphism $f:K \ra G$ and a subgroup $H \subset G$ of finite index,
$\mathrm{res}(f )^* \circ \Tr_{H}^G $ and $\Tr_{f^{-1}(H)}^K \circ f^*$ are not always equal. For instance,
for any odd prime $p \in \Z$,
%example,
%the case of
if $\Z_p =:H =K\subset G:= \mathfrak{S}_p $ and $f$ is the inclusion $K=\Z_p \hookrightarrow \mathfrak{S}_p $, then $f^* \circ \Tr_{H}^G =0$ and $\Tr_{f^{-1}(H)}^K \circ f^*$ is the identity on $H^1( \bullet ;\Z_p).$
Thus, in general, \eqref{s2454} does not always hold without the assumption.
% , .
\end{rem}
We later show (Example \ref{ex3412244}) that $\mathrm{SL}_2(\F_q)$ in some cases is of type $C_2$. Thus,
\begin{cor}\label{ex373413} Let $G$ be $\mathrm{SL}_2(\F_q)$ %or $\mathrm{PSL}_2(\F_q)$
with odd prime power $q \geq 10$.
% finite group of type $C_{2,m}$.
%Let $m' \in \mathbb{N}$ be $\mathrm{l.c.d.}(12,m^2)$.
Take a generator $\phi $ of $ H^3(G;\Q/\Z) \cong \Z/(q^2 -1) \Z $.
Then, %for any 3-cocycle $\lambda \in H^3(G;\Q/\Z) $,
the DW invariant \eqref{seq75559} with $\psi = 12 \phi$ can be computed from
the set $ \Hom(\pi_1(M),G)$ and the linking forms of some finite covering spaces of $M$.
\end{cor}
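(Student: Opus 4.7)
The plan is to deduce the corollary as a direct application of Theorem \ref{ex37341} with $m=2$, the only genuinely new input being that $G=\SL_2(\Fq)$ is of type $C_2$ for every odd prime power $q\geq 10$. Granting this (the content of the forthcoming Example \ref{ex3412244}), we obtain $m' = \mathrm{l.c.m.}(12, m^2) = \mathrm{l.c.m.}(12,4) = 12$, which matches the factor $12$ in the statement. Theorem \ref{ex37341} then asserts that for every $\lambda\in H^3(G;\Q/\Z)$ the invariant $\mathrm{DW}_{12\lambda}(M)$ is determined by $\Hom(\pi_1(M),G)$ together with the cup products and Bockstein maps on the finite covers $M_{f,H_i}$ associated with the subgroups $H_i\subset G$ witnessing type $C_2$. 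Taking $\lambda$ to be the generator $\phi$ of $H^3(G;\Q/\Z)\cong\Z/(q^2-1)\Z$ then yields the corollary.

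To verify type $C_2$ I would use the standard subgroup structure of $\SL_2(\Fq)$: the split torus $T_s\cong\Z/(q-1)$, the non-split torus $T_{ns}\cong\Z/(q+1)$, together with their obvious one-dimensional characters (and possibly a Sylow $p$-subgroup inside the Borel, if it is needed to hit the $p$-primary summand of $H^4$). Since $q$ is odd, both $|T_s|=q-1$ and $|G/T_s|=q(q+1)$ are even, so $2$ divides $\gcd(|T_s|,|G/T_s|)$, and symmetrically for $T_{ns}$; this takes care of the divisibility half of condition (ii) of Definition \ref{ex461}. For condition (i) one combines Brauer induction with the known computation of $H^*(\SL_2(\Fq);\Z)$ and of the Chern subring $\mathrm{CH}^*(\SL_2(\Fq))$ to show that the subring generated by the total Chern classes of the induced characters captures $2\cdot H^4(G;\Z)$; the hypothesis $q\geq 10$ is expected to rule out the sporadic cases $q=3,5,7,9$ in which these cohomological computations are exceptional.

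The main obstacle is condition (ii) of Definition \ref{ex461}, namely producing integral classes $\kappa_i\in H^3(G;\Z)$ whose restriction to $H_i$ equals $2\,\phi_i\smile_{\Q/\Z}\beta(\phi_i)$. For the cyclic subgroups above, Lemma \ref{ex3734199} reduces this to constructing a lift modulo $\Ker(\mathrm{Tr}^G_{H_i})$; since the transfer from a torus into $\SL_2(\Fq)$ is injective on the appropriate primary components (by a standard Sylow argument using that the tori are Hall subgroups of their normalisers), a suitable $\kappa_i$ can be produced as $s^{-1}\mathrm{Tr}^G_{H_i}(\phi_i\smile\beta(\phi_i))$ in the spirit of the proof of Lemma \ref{ex3734199}. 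Once conditions (i)--(ii) are verified, the corollary follows immediately from Theorem \ref{ex37341}, the relevant finite covers of $M$ being precisely the $T_s$- and $T_{ns}$-covers (together with a Sylow $p$-cover, if one has been used).
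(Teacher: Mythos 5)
Your top-level reduction coincides with the paper's: the corollary is Theorem \ref{ex37341} applied with $m=2$, so that $m'=\mathrm{l.c.m.}(12,4)=12$, once one knows that $\mathrm{SL}_2(\F_q)$ is of type $C_2$; the paper states the corollary as an immediate consequence of that theorem together with Example \ref{ex3412244}. The divergence, and the gap, lies in how you certify type $C_2$. The paper does this by reducing to Sylow subgroups via Lemma \ref{ex34122}: the odd Sylow subgroups of $\mathrm{SL}_2(\F_q)$ are cyclic, the $2$-Sylow subgroup is generalized quaternion, and the quaternion case is settled in Example \ref{ex34221} through its index-two cyclic subgroup, the key classical input being the injection $H_3(\Z/2n;\Z)\hookrightarrow H_3(Q_{4n};\Z)$. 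You instead work directly with the maximal tori $T_s\cong\Z/(q-1)$ and $T_{ns}\cong\Z/(q+1)$.

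The genuine gap is in your treatment of condition (ii) of Definition \ref{ex461}, which you dispose of by asserting that $\Tr^G_{T}$ is injective on the relevant primary components ``since the tori are Hall subgroups of their normalisers.'' That justification does not give what is needed: the tori are not Hall subgroups of $G$ (indeed $\gcd(q-1,\,q(q+1))=2$, which is precisely why $m=2$ rather than $m=1$), being a Hall subgroup of one's normaliser says nothing about injectivity of the transfer into $G$, and the paper explicitly warns just before Definition \ref{ex461} that the transfer ``is far from being injective'' in general, devoting Remark \ref{s2274} to the resulting pathologies. The delicate point is exactly the prime $2$: writing $2^t$ for the $2$-part of $q^2-1$, the $2$-parts of the two tori are $\Z/2^{t-1}$ and $\Z/2$, while $H^4(G;\Z)_{(2)}\cong\Z/2^t$ is cyclic, so verifying either (i) or (ii) $2$-locally from the tori requires an actual double-coset/Mackey computation of $\mathrm{res}\circ\Tr$ that you do not carry out — this is the computation the paper avoids by routing the $2$-primary information through the quaternion Sylow subgroup. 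Your condition (i) is likewise only asserted by appeal to ``known computations'' of the Chern subring. As written, the proposal establishes the easy half (the reduction to Theorem \ref{ex37341}, the value $m'=12$, and the parity of $\gcd(|T|,|G/T|)$) but leaves the substantive cohomological verification of type $C_2$ open.
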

We now give some properties of groups of $C_m$-type, and such examples.

\begin{exa}[Cyclic group]\label{ex341} For any $n \in \Z$, every cyclic group $\Z/n$ is of type $C_{1}$ by definition.
In fact we may let $H_i$ be $G$, since $H^3(G;\Z) \cong \Z/|G|$ is generated by $ \phi \smile_{\Q/\Z} \beta (\phi)$,
where $\phi : G \ra \Q/\Z$ is an inclusion map. 
\end{exa}
\begin{exa}[Dihedral group]\label{s22384}
Suppose integers $a,n \in \N$ such that $(a,n)=1.$
Considering a surjective homomorphism $\Z/a \ra (\Z/n)^{\times}$,
we obtain the semi-direct product $G:=\Z/n \rtimes \Z/a $.
By transfer, we can easily show that $H^4(G;\Z) \neq 0$ if and only if $a=2$.
Thus, we may suppose $a=2$, that is,
$G$ is the dihedral group, $D_n$, of order $2n$.

Let $H \subset G$ be the subgroup $\Z/n \times \{ 0\}$.
As is known (see, e.g., page 74 of \cite{Th}), % the group cohomology
$ 2H^4( G;\Z ) \cong \Z/n $ is generated by $c_2(\rho)$, where $\rho= \Ind_{\Z_m}^{D_n} \phi $.
Namely, the group $D_n$ is of type $C_{2}$. %\mathrm{Ch}_{2 } $.
Here $\phi: \Z/n \ra \mathrm{GL}_1(\C)=\Q/\Z$ is a 1-cocycle that sends $k$ to $k/n.$
%Then, We now give a computation of DW invariants from the dihedral group.
%Let $m \in \mathbb{N}$ be odd, and $G$ be the dihedral group $D_n:=\Z_m \rtimes \Z_2$.
%Take $\phi: \Z_m \ra \mathrm{GL}_1(\C)=\C^{\times }$ which sends $n$ to $\mu_m^n.$
%Hence, by \eqref{seq77111} and \eqref{s2454}, the $12$--multiple of the DW invariant \eqref{seq75559} with $\psi = \beta^{-1}(c_2(\rho) ) $ can be computed from the cohomology rings of some double covering spaces of $M$.
\end{exa}
\begin{exa}[quaternion group]\label{ex34221}
Fix $n \in \N$.
Consider the generalized quaternion group $Q_{4n}$ of order $4n$, which has a presentation
$ \langle x,y \mid x^n =y^2 , xyx=y\rangle$. Let $H$ be the cyclic group generated by $x$.
It is known that
\[ H_1(Q_{4n} ;\Z) \cong \Z/4 , \quad H_2(Q_{4n} ;\Z) \cong 0, \quad H_3(Q_{4n} ;\Z) \cong \Z/ 4n,\]
and the inclusion $H \subset Q_{4n}$ induces an injection $\Z/2n=H_3(H) \hookrightarrow H_3(Q_{4n} ). $
Thus, $Q_{4n}$ satisfies (ii).
Hence, $ Q_{4n} $ is of type $C_{2}. $
\end{exa}
Next, to observe Example \ref{ex3412244}, we prepare a lemma, which is immediately shown by definition.
\begin{lem}\label{ex34122}
If the map $H^*(G;\Q/\Z) \ra \oplus_{K \subset G : \textrm{Sylow group}} H^*(K ;\Q/\Z) $ induced by inclusions is surjective,
and any Sylow group of $G$ is of $C_{m} $-type,
then so is $G$.
%$G $ is also
% for any prime $p\in \Z$, thenso is $G$, since ; see \cite[Theorem III.10.3]{Br}. % is
%For any $n,m \in \Z$, every cyclic group is of type $C_{n,m}$ by definition.
\end{lem}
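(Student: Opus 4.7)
The plan is to assemble $C_m$-data for $G$ out of $C_m$-data for its Sylow subgroups. For each prime $p$ dividing $|G|$, fix a Sylow $p$-subgroup $K_p\subset G$ and let $\{(H_i^{p},\phi_i^{p})\}_i$ denote the $C_m$-data of Definition \ref{ex461} for $K_p$. As candidate data for $G$, take the union $\bigcup_{p}\{(H_i^{p},\phi_i^{p})\}_i$, viewing each $H_i^{p}$ as a subgroup of $G$ via $H_i^{p}\subset K_p\subset G$ and equipping it with the same one-dimensional representation $\phi_i^{p}$.

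Condition (ii) of Definition \ref{ex461} would be verified first. Since $|H_i^{p}|$ is a $p$-power and $[G:K_p]$ is coprime to $p$, the elementary identity
\[
\gcd(|H_i^{p}|,|G/H_i^{p}|)=\gcd\bigl(|H_i^{p}|,[G:K_p]\cdot[K_p:H_i^{p}]\bigr)=\gcd(|H_i^{p}|,[K_p:H_i^{p}])
\]
reduces divisibility by $m$ to the $C_m$-hypothesis for $K_p$. For the cohomology lift $\kappa_i\in H^3(G;\Z)$, the surjectivity hypothesis combined with the Bockstein isomorphism $H^{n}(\bullet;\Z)\cong H^{n-1}(\bullet;\Q/\Z)$ (valid in positive degree for finite groups) gives surjectivity of $H^3(G;\Z)\to\bigoplus_{K}H^3(K;\Z)$. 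Lifting the tuple with $\kappa_i^{p}\in H^3(K_p;\Z)$ in the $K_p$-slot and zero elsewhere produces $\kappa_i$, and the required identity $(H_i^{p}\hookrightarrow G)^{*}(\kappa_i)=m\phi_i^{p}\smile\beta(\phi_i^{p})$ then follows from naturality of restriction.

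Condition (i) is the main work. Under the surjectivity hypothesis, the standard injectivity $H^{4}(G;\Z)_{(p)}\hookrightarrow H^{4}(K_p;\Z)$ (coming from $\Tr_{K_p}^{G}\circ\mathrm{res}_{K_p}^{G}=[G:K_p]$, a $p$-local unit) can be upgraded to an isomorphism $H^{4}(G;\Z)_{(p)}\cong H^{4}(K_p;\Z)$. Decomposing any $\alpha\in mH^{4}(G;\Z)$ as $\alpha=\sum_{p}\alpha_p$ into $p$-primary parts, I would write $\mathrm{res}_{K_p}^{G}(\alpha_p)\in mH^{4}(K_p;\Z)$ as a polynomial $P_p$ in $c(\Ind_{H_i^{p}}^{K_p}\phi_i^{p})$ using the $C_m$-type of $K_p$, and take the natural lift $\tilde\alpha_p:=P_p\bigl(c(\Ind_{H_i^{p}}^{G}\phi_i^{p})\bigr)$ as a candidate element of the Chern subring of $G$ representing $\alpha_p$.

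The main obstacle is that $\mathrm{res}_{K_p}^{G}\tilde\alpha_p$ does not recover $\mathrm{res}_{K_p}^{G}\alpha_p$ on the nose, because of Mackey-type cross terms: by the double coset formula, $\mathrm{res}_{K_p}^{G}\Ind_{H_i^{p}}^{G}\phi_i^{p}$ splits off $\Ind_{H_i^{p}}^{K_p}\phi_i^{p}$ together with representations induced from the intersections $K_p\cap gH_i^{p}g^{-1}$ as $g$ runs over nontrivial double cosets, and Whitney multiplicativity turns these into extra factors in the total Chern class. I would handle this by induction on cohomological degree: the deviation of $\mathrm{res}_{K_p}^{G}\tilde\alpha_p$ from $P_p\bigl(c(\Ind_{H_i^{p}}^{K_p}\phi_i^{p})\bigr)$ lives in the positive-degree part of $H^{*}(K_p;\Z)$ and can be corrected by a lower-degree element of the Chern subring of $G$, using the isomorphism $H^{*}(G;\Z)_{(p)}\cong H^{*}(K_p;\Z)$ and the $C_m$-hypothesis in lower degree. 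For $q\neq p$, $\mathrm{res}_{K_q}^{G}\Ind_{H_i^{p}}^{G}\phi_i^{p}$ is a multiple of the regular representation of $K_q$ (since $K_q\cap gH_i^{p}g^{-1}=\{1\}$), which affects only the $q$-primary part in a controllable way and so does not interfere with the $p$-local analysis.
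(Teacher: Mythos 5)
The paper offers no argument for this lemma at all---it is asserted to be ``immediately shown by definition''---so there is no written proof to compare yours against; the only question is whether your argument closes. Your treatment of condition (ii) does: the reduction $\gcd(|H_i^{p}|,|G/H_i^{p}|)=\gcd(|H_i^{p}|,[K_p:H_i^{p}])$ is correct, and producing $\kappa_i$ from the hypothesised surjectivity, the Bockstein isomorphism, and naturality of restriction is fine.

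The gap is in condition (i), and it sits exactly at the point you yourself flag. You correctly observe that $\mathrm{res}_{K_p}^{G}\,c\bigl(\Ind_{H_i^{p}}^{G}\phi_i^{p}\bigr)$ differs from $c\bigl(\Ind_{H_i^{p}}^{K_p}\phi_i^{p}\bigr)$ by Whitney factors coming from the Mackey cross terms $\Ind_{K_p\cap gH_i^{p}g^{-1}}^{K_p}({}^{g}\phi_i^{p})$, but the proposed repair is not a proof. First, these cross terms are Chern classes of representations that are \emph{not} among the chosen generators for $K_p$, so there is no reason the deviation lies in the subring generated by the $c(\Ind_{H_i^{p}}^{K_p}\phi_i^{p})$, nor in the restriction of the Chern subring of $G$; the isomorphism $H^{*}(G;\Z)_{(p)}\cong H^{*}(K_p;\Z)$ identifies cohomology groups, not these subrings. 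Second, ``the $C_m$-hypothesis in lower degree'' does not exist: Definition \ref{ex461}(i) constrains only $mH^{4}$, so there is nothing to induct on in degrees $2$ and $3$---already at the $c_1$ level the cross term $\sum_{g} c_1\bigl(\Ind_{K_p\cap gH_i^{p}g^{-1}}^{K_p}{}^{g}\phi_i^{p}\bigr)$ is not visibly in your generating subring. Third, each $\tilde\alpha_p$ has nonzero $q$-primary components for $q\neq p$ (Chern classes of multiples of the regular representation of $K_q$ are generally nontrivial), and these errors at $q$ are again not known to lie in the generated subring, so summing over $p$ does not make the correction process terminate. To close the argument one would have to either enlarge the generating data (e.g.\ adjoin the double-coset intersections as additional pairs $(H,\phi)$ and run a genuine induction on the index), or establish condition (i) by a different mechanism such as transfer combined with the Riemann--Roch identities \eqref{seq747}--\eqref{seq74799}, at the cost of extra factors of $6$ or $12$ in $m$.
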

\begin{exa}\label{ex3412244}
Using Lemma \ref{ex34122}, we now show that, for any odd prime power $ q \in \Z$ with $q>10$, the special linear group $\mathrm{SL}_2(\F_q)$ is of type $C_{2}$.
In fact, it is classically known (see, e.g., \cite{Hut}) that,
the inclusion from any $p$-Sylow group, $G_p$ induces an injection $H_3(G_p;\Z) \ra H_3(\mathrm{SL}_2(\F_q);\Z) \cong \Z/q^2-1$,
and that if $p$ is odd, then $G_p$ is cyclic; %, and the induced map  is injective; 
the $2$-Sylow group $G_2$ is the generalized quaternion group of order $2^t$ for some $t \in \N$, and contains a cyclic subgroup $\Z_{ 2^{t-1}}$ which is of type $C_2$ as in Example \ref{ex34221}.
% induces an injection $H_3(\Z_{ 2^{t-1}};\Z) \ra H_3(G_2;\Z) \cong \Z_{2^t}. $
In conclusion, by $H_3(H_i ;\Z ) \cong H^4(H_i;\Z) $ as in \eqref{lklk}, every sylow subgroup of $ \mathrm{SL}_2(\F_q)$ is of type $C_{2}$ as desired; hence, $\mathrm{SL}_2(\F_q)$ is of type $C_{2}$ by Lemma \ref{ex34122}.

Similarly, we can show that the projective one $\mathrm{PSL}_2(\F_q)$ is also of type $C_{4}$.
% Here, we should emphasize the fact that the homology $H_*(\mathrm{PSL}_2(\F_q);\Z)$ is also a cyclic group}.
\end{exa}
\begin{rem}\label{ex34244}
More generally, we can show that every finite group with periodic group cohomology is of type $C_4$.
Here, the cohomology is said to be periodic if there is $k \in \N $ such that $H^{*+k}(G;\Z) \cong H^{*}(G;\Z) $.
The proof is as follows. It is known as Suzuki-Zassenhaus theorem (see \cite[IV.6]{AM} that such groups are completely classified as some semi-direct products or $\Z/2$-central extensions of the above groups. Thus, we can determine the type $C_4$ using similar discussions in the above examples.
\end{rem}
% Suppose that $G$ is of $(2,m)$-type.
%Next, let us observe the equality \eqref{s244} below.

%the $\mathrm{l.c.d}(12,m)$-multiple of the DW invariant \eqref{seq75559} with $\psi = \beta^{-1}(c_2(\rho) ) $ can be computed from the cohomology rings of the covering spaces $M_{f,H_i } $.
%As seen in the proof, the data of $(d_k,e_k, M_{f, j }, h_{ f,j}) $ will be concretely constructed,
%the left hand side of \eqref{seq77111} can be computed from the right one; see Examples \ref{} and \ref{}} for examples.
% Notice that if $H_1(M;\Z) $ is trivial, the first term in \eqref{a1} vanishes.
%In conclusion, comparing \eqref{s244} with Theorem \ref{prop1}, if $\psi =\beta^{-1}(c_2(\rho))$ for some representation $\rho : G \ra \mathrm{GL}_n(\C)$, the 12 multiple of the Dijkgraaf-Witten invariant from $ $

%\begin{proof}[Proof of Theorem \ref{prop44}]%&\label{s2444}
% \end{proof}
% \begin{equation}\label{s2444} \end{equation}
\begin{exa}\label{s2284}
Finally, let us give a simple computation of DW invariants from the dihedral group.
Let $n \in \mathbb{N}$ be odd, and
$G$ be the dihedral group $D_n:=\Z/n \rtimes \Z/2 $.
%Take $\phi: \Z_m \ra \mathrm{GL}_1(\C)=\C^{\times }$ which sends $n$ to $\mu_m^n.$As is known (see, e.g., page 74 of \cite{Th}), the group cohomology
%$ 2H^4( D_n;\Z ) \cong \Z_{m} $ is generated by $c_2(\rho)$, where $\rho= \Ind_{\Z_m}^{D_n} \phi $.
%Namely, the group $D_n$ is of type $C_{2,m}$. %\mathrm{Ch}_{2 } $.
Hence, by \eqref{seq77111} and \eqref{s2454}, the $12$--multiple of the DW invariant \eqref{seq75559} with $\psi = \beta^{-1}(c_2(\rho) ) $ can be computed from the cohomology rings of some double covering spaces of $M$.

For more concrete computations, let us fix some 3-manifolds.
Since $ Q_{4n}$ is well known to be a subgroup of $SU(2)=S^3 $, we have a closed 3-manifold $M_n:=S^3/ Q_{4n} $.
%?
%Let us identify the real projective space $ \R P^3$ with $SO(3)$. Since $D_n=\Z_m \rtimes \Z_2$ injects $SO(3)$, we havea closed 3-manifold $M_n:= SO(3)/ D_n$.
By the Lyndon-Hochschild-Serre spectral sequence, the projection $Q_{4n} \ra D_n$ induces a surjection $H_3(Q_{4n};\Z) \ra 2 H_3(D_{n};\Z) \cong \Z/n . $
Meanwhile, the fourfold covering space of $M_n$ by the abelianization is the lens space $L(n,q)$ for some $q \in \Z/n $. %Suppose
Therefore, the covering $L(n,q) \ra M_n$ induces a canonical injection \[\mathcal{B} : \Z/n \cong \Hom ( \pi_1(L(n,q)) , \Z/n ) \hookrightarrow \Hom ( \pi_1(M_n) , D_n ) ,\]
such that the complement $\Hom ( \pi_1(M_n) , D_n ) \setminus \mathrm{Im}(\mathcal{B}) $ consists of the trivial map.
Since $\mathrm{DW}_{\beta^{-1} (c_1(\phi)^2)} ( L(n,q)) =  \sum_{j=1}^n 1_{\Z}(q j^2/n ) \in \Z[\Q/\Z]$ is known (see, e.g, \cite{DW,MOO}),
by \eqref{s2454}, we conclude $\mathrm{DW}_{\beta^{-1} c_2(\rho )} ( M_n ) = \sum_{j=0}^n 1_{\Z}( 2q j^2/n) \in \Z[\Q/\Z]$.
% \cup \{ \mathrm{pt.}\} $
\end{exa}
%Finally,
More generally, it is sensible to examine more examples of groups that are (not) of type $C_{2} $,
and to describe the second Chern classes with computations of the associated Dijkgraaf-Witten invariants.
For example, if $G= \mathrm{SL}_2 (\mathbb{F}_q)$ as in Corollary \ref{ex373413},
we can compute the invariants via the procedure in Remark \ref{prop084}; however, the resulting are complicated.
In this paper, we omit to add other examples of the computation; in fact, the resulting computation of even Seifert homology spheres with $G= \mathrm{PSL}_2 (\mathbb{F}_p)$ is hard to write and seems complicate; see \cite{Y} for the details (see also \cite[Proposition 5.3]{Nos} for the complicate computation in nilpotent cases).
%This is a subject for future analysis.

% The C–C–Classes are defined from the following data:

%\begin{exa}\label{s255284}\end{exa}

% end this paper by discussing two other approaches to the second Chern classes,

\section{Other cohomological approaches to second Chern classes}\label{SS4}
We end this paper by suggesting other three approaches to second Chern classes,
and examining renormalization of the 12--multiplication in Proposition \ref{prop4784}.
%In this section, there is nothing new.
% contains nothing new.

First, let us review \cite[Theorem 4]{Ev2}. Take a subgroup $H \subset G$ with a representation $\phi: H \ra \mathrm{GL}_1(\C)$.
Let $ \pi=\Ind_H^G(1)$. Then, the theorem says the equalities
\begin{equation}\label{t9921} c_1( \Ind_H^G(\phi)) =\Tr_H^G( c_1(\phi)) +c_1(\pi) \in H^2(G;\Z), %\mathcal{N}_H^G(1+c_1( \phi)) + c_1(\pi) \smile \Tr_H^G( c_1(\phi))+ c_2(\pi) \in H^4(G;\Z).
\end{equation}
\begin{equation}\label{t991} c_2( \Ind_H^G(\phi)) = \mathcal{N}_H^G(1+c_1( \phi)) + c_1(\pi) \smile \Tr_H^G( c_1(\phi))+ c_2(\pi) \in H^4(G;\Z).
\end{equation}
% \[ c_2( \Ind_H^G(\phi)) = \mathcal{N}_H^G(1+c_1( \phi)) + c_1(\pi) \smile \Tr_H^G( c_1(\phi))+ c_2(\pi) \in H^4(G;\Z). \]
Here, $ \mathcal{N}_H^G$ is the multiplicative transfer of Evens (see \cite[\S 6.1]{Ev1}), and
$2c_1(\pi)=0$ and $12 c_2( \pi)=0$ are known (compared with \eqref{seq747}).
%Further, $2c_1(\pi)=0$ is also known.

As mentioned in \cite{Q} as an unpublished note, a representative 4-cocycle of the first term in \eqref{t991} is described as follows.
Let $ \mathfrak{S}_{|G/H|} $ be the symmetric group of order $ |G/H| $ with a permutation action on the product $H^{|G/H|}$.
We can define the semidirect product $H^{|G/H|} \rtimes \mathfrak{S}_{|G/H|} $.
Take a group homomorphism $\Phi: G \ra H^{|G/H|} \rtimes \mathfrak{S}_{|G/H|} $ defined in \cite[\S 5.2]{Ev1}, which is called {\it the monomial representation}, and consider a non-homogeneous 4-cocycle
%\[
$\tilde{c}_4 :(H^{|G/H|} \rtimes \mathfrak{S}_{|G/H|} )^4\ra \Z $ in the non-homogeneous cochain group defined by setting %which sends
\[ [(h_1, \dots, h_{|G/H|} , \sigma)|(h_1', \dots, h_{|G/H|}' , \sigma') | (h_1'', \dots, h_{|G/H|}'' , \sigma'')| (h_1''', \dots, h_{|G/H|} ''', \sigma''')] \mapsto \]
\begin{equation}\label{t2}\sum_{ 1 \leq i \neq j \leq |G/H|} c_1(\phi)(h_{\sigma^{-1}(i)} , h_{(\sigma \sigma')^{-1}(i)}' ) \cdot c_1(\phi)(h_{(\sigma\sigma' \sigma'')^{-1}(j)}'' , h_{(\sigma\sigma'\sigma'' \sigma''')^{-1} (j) }'''),
\end{equation}
%\[ \sum_{ 1 \leq i \neq j \leq |G/H|} c_1(\phi)(h_{\sigma^{-1}(i)} , h_{(\sigma \sigma')^{-1}(i)}' ) \cdot c_1(\phi)(h_{(\sigma\sigma' \sigma'')^{-1}(j)}'' , h_{(\sigma\sigma'\sigma'' \sigma''')^{-1} (j) }'''),\]
where the 2-cocycle $c_1(\phi) = \beta(\phi)$ is represented by a map $H \times H \ra \Z$ as in \eqref{t0099}. Then, $ \mathcal{N}_H^G(1+c_1( \phi)) = \Phi^*( \tilde{c}_4 )$ is shown \cite{Q}.
Moreover, in general, for a non-homogeneous 4-cocycle $C_4: G^4 \ra \Z$, the inverse $\beta^{-1}(C_4) $ is represented by a map
\begin{equation}\label{tu3}G \times G \times G \lra \Q/\Z; (g_1,g_2,g_3) \longmapsto \sum_{g \in G } \frac{C_4(g, g_1,g_2,g_3) }{|G|^2 },
\end{equation}
%\[G^3 \lra \Q/\Z; (g_1,g_2,g_3) \longmapsto [\sum_{g \in G } \frac{C_4(g, g_1,g_2,g_3) }{|G|}] \in \Q/\Z,\]
where we should notice that the sum $\sum_{g \in G } C_4(g, g_1,g_2,g_3 ) $ is divisible by $|G|$ for any $(g_1,g_2,g_3) \in G^3. $
This formula \eqref{tu3} of $\beta^{-1}(C_4)$ can be easily proved from the definition of the Bockstein map.
In conclusion, by \eqref{t991}--\eqref{tu3}, we can find a representative 3-cocycle of $\beta^{-1}( c_2( \Ind_H^G(\phi))) $ as a map $G^3 \ra \Q/\Z$.
Meanwhile, following an outline in \cite[\S 5]{Nos}, for some 3-manifolds $M$, we can sometimes describe concretely a chain 3-class of $[M]$ and the induced classifying map $\iota_*$; hence,
for any homomorphism $f: \pi_1(M) \ra G$,
we can find a 3-chain of the pushforward $f_*([M]) \in H_3(G;\Z)$. In conclusion,
by combing the 3-chain with the presentation of $c_2( \Ind_H^G(\phi)) $, we can compute the Dijkgraaf-Witten invariant with $\psi= \beta^{-1} c_2( \Ind_H^G(\phi))$.

Incidentally, as pointed out in \cite{Ev2, Q}, the 12--multiplication of Proposition \ref{prop4784} is essentially derived from the (stable) third homology $H_3(\mathfrak{S}_n;\Z ) \cong \Z_{12} \oplus (\Z_2)^{d_n }$ with relation to the stable homotopy group of spheres $\pi_3^S \cong \Z_{24}$, where $d_4=d_5=1,$ and $d_n=2$ for $n \geq 6$. The third framed bordism group is also isomorphic to $\pi_3^S $.
Thus, a refinement of the 12--multiple DW invariant may be interpreted from some obstructions of the sphere spectrum or some (co)-homology operations.

As the second approach, let us observe a topological approach by the splitting principle.
Given a representation $\rho: G \ra \mathrm{GL}_n(\C)$, we can construct a cellular map $\bar{\rho}: BG \ra \mathrm{Gr}(n)$ such that
$c_i(\rho) =\bar{\rho}^*(c_i)$,
where $\mathrm{Gr}(n)$ is the infinite-dimensional complex Grassmannian manifold and
$c_i \in H^{2i}( \mathrm{Gr}(n);\Z ) $ is the Chern class; see, e.g., \cite[\S 5]{Th} for the construction.
Let $\gamma $ be the tautological vector bundle on $\mathrm{Gr}(n) $ such that $c_i=c_i(\gamma)$.
For a homomorphism $f: \pi_1(M) \ra G$, the splitting principle admits a continuous map $p_{f}: M_f \ra M$ such that
$ p_f^*: H^* (M; \Z)\ra H^* (M_f; \Z)$ is injective and
the pullback $ p_f ^* \circ f^* ( \gamma )$ is decomposed as a Whiteny sum of some line bundles on $M_f$.
%$p_f ^* \circ f^* (c_2(\rho)) = p_f ^* \circ f^* (c_2(\rho))$
As in \eqref{s256}, the cohomology 3-class $\beta^{-1} p_f^*\circ f^* ( c_2(\rho) )$ can be computed from the cohomology ring of $M_f$.
In summary, if we can construct $M_f$ as a manifold and determine the cohomology ring $H^*(M_f;\Z/m)$ for any $m \in \N$,
we can compute the pairing $ \langle f^* (\beta^{-1}(c_2(\rho) )), \iota_*[M] \rangle \in \Q/\Z$.

% If $M$ is an integral homology sphere, that is, $H_*(M;\Z) \cong H_*(S^3 \Z)$,

Finally, we briefly mention modular representations. Choose distinct primes $p,\ell \in \N$.
While we considered only complex representations, we can also obtain from any representation $\rho: G \ra \GL_n(\overline{\mathbb{F}}_p)$ uniquely a Chern class $c_i (\rho) \in H^{2i }(G; \widehat{\Z}_{\ell} )$, where $\widehat{\Z}_{\ell} $ is the $\ell$-adic integer ring; see \cite{Kr} or \cite[p.111]{Th} for the details.
Therefore, similarly to \eqref{lklk}, we have a cohomology 3-class $ \beta^{-1} (c_2(\rho))$.
If $p, \ell >3$, and a similar Riemann-Roch theorem is true, we can easily obtain similar results in Sections \ref{SS2} and \ref{SS3}.
For example, the cohomology $H^*(\GL_n ( \F_{p^k}); \Z[1/p])$ is generated by such Chern classes, as is stated in the famous result of Quillen \cite{Qu},
while $ \GL_n ( \F_{p^k})$ is not of type $C_{2,m}$ for enough large $n,k$.
Thus, %if $G$ is a Chevalley group,
to analyze the DW invariant, we might employ Chern classes from some modular representations of $G$.

\appendix

\section{Relation to the Chern-Simons invariant}\label{SS8884}
In this appendix, we explain the viewpoint of the DW invariant in terms of the Chern-Simons invariant.
As mentioned in \cite{CS}, the Chern-Simons 3-class can be considered as a group 3-cocycle $\mathrm{CCS}$ in $C^3(\mathrm{GL}_N(\mathbb{C}) ; \C/\Z)$ with $N \geq 2$.
Thus,
given a homomorphism $F: \pi_1(M) \ra \mathrm{GL}_N(\mathbb{C})$,
{\it the Chern-Simons invariant}, $\mathrm{CS}(F)$, is defined as $\langle \mathrm{CCS} ,F_* \iota_* ([M]) \rangle \in \C/\Z$.

\begin{prop}\label{s22848} Take a representation $\rho: G \ra \mathrm{GL}_N(\mathbb{C}) $. %Define $\kappa : \Q/\Z \hookrightarrow \Q/\Z$ by $k \mapsto k/|G|.$
Then, the image of $ \rho^*(\mathrm{CCS} )$ is contained in $\Q/\Z$, and
\begin{equation}\label{888} \rho^*(\mathrm{CCS} ) = \beta^{-1}(c_2(\rho )) \in H^3(G;\Q/\Z).
\end{equation}
% \[ \iota (\rho^*(\mathrm{CCS} )) = \beta^{-1}(c_2(\rho )) \in H^3(G;\Q/\Z).\]
In particular, the DW invariant with $\psi=\beta^{-1}(c_2(\rho )) $ is equal to
$ \sum_{f }1_{\Z} \mathrm{CS}( \rho \circ f ) \in \Z[\Q/\Z]$
where $f$ runs over $\Hom( \pi_1(M) , G )$.
% . \end{equation}for any homomorphism $f: \pi_1(M) \ra G$,
\end{prop}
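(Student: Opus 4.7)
The plan is to compare the two Bockstein connecting homomorphisms associated to the short exact sequences $0 \to \Z \to \Q \to \Q/\Z \to 0$ and $0 \to \Z \to \C \to \C/\Z \to 0$, and to reduce the identity \eqref{888} to the defining property of $\mathrm{CCS}$. Recall that, as constructed in \cite{CS}, $\mathrm{CCS}$ is a $\C/\Z$-valued group $3$-cocycle that lifts the universal second Chern class $c_2$ through the Bockstein $\beta_{\C/\Z} : H^3(\mathrm{GL}_N(\C)^\delta;\C/\Z)\to H^4(\mathrm{GL}_N(\C)^\delta;\Z)$. Pulling back by $\rho$ and using naturality of $\beta_{\C/\Z}$, one then has $\beta_{\C/\Z}(\rho^*(\mathrm{CCS})) = \rho^*(c_2) = c_2(\rho) \in H^4(G;\Z)$.

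Next I would exploit the finiteness of $G$. Because $|G|$ annihilates $H^n(G;M)$ for every $n\geq 1$ and every $G$-module $M$, and because $|G|$ is invertible in $\Q$ and in $\C$, we have $H^n(G;\Q) = H^n(G;\C) = 0$ for $n\geq 1$. The resulting long exact sequences force both
\[ \beta : H^3(G;\Q/\Z) \xrightarrow{\ \cong\ } H^4(G;\Z), \qquad \beta_{\C/\Z} : H^3(G;\C/\Z) \xrightarrow{\ \cong\ } H^4(G;\Z) \]
to be isomorphisms. The map of short exact sequences induced by the coefficient inclusion $\Q/\Z \hookrightarrow \C/\Z$ yields a commutative square $\beta = \beta_{\C/\Z} \circ j_*$, where $j_* : H^3(G;\Q/\Z)\to H^3(G;\C/\Z)$ is the induced map on cohomology. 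The two isomorphisms above then force $j_*$ to be an isomorphism, so $\rho^*(\mathrm{CCS})$ lies in its image; this is the first assertion. Moreover, $\beta_{\C/\Z}(j_*\beta^{-1}(c_2(\rho))) = \beta(\beta^{-1}(c_2(\rho))) = c_2(\rho) = \beta_{\C/\Z}(\rho^*(\mathrm{CCS}))$, so injectivity of $\beta_{\C/\Z}$ gives $\rho^*(\mathrm{CCS}) = j_*\beta^{-1}(c_2(\rho))$, i.e.\ the identity \eqref{888} upon identifying $H^3(G;\Q/\Z)$ with its image in $H^3(G;\C/\Z)$.

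The ``in particular'' part is then an immediate unwinding of definitions: for each $f\in\Hom(\pi_1(M),G)$, naturality gives $f^*\bigl(\beta^{-1}(c_2(\rho))\bigr) = f^*\rho^*(\mathrm{CCS}) = (\rho\circ f)^*(\mathrm{CCS})$, so pairing with $\iota_*[M]$ yields $\langle f^*\psi,\iota_*[M]\rangle = \mathrm{CS}(\rho\circ f) \in \Q/\Z$. Summing over $f$ recovers the formal sum $\sum_f 1_\Z\, \mathrm{CS}(\rho\circ f) \in \Z[\Q/\Z]$.

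The main obstacle I anticipate is pinning down the defining relation $\beta_{\C/\Z}(\mathrm{CCS}) = c_2$ at the level of the discrete classifying space $B\mathrm{GL}_N(\C)^\delta$, i.e.\ verifying that the Cheeger-Chern-Simons $3$-cocycle really does represent the Bockstein preimage of the integral second Chern class in this setting; once this is invoked from \cite{CS}, the rest is routine diagram-chasing enabled by the vanishing of $H^*(G;\Q)$ and $H^*(G;\C)$ in positive degrees.
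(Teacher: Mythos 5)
Your proposal is correct, and it reaches \eqref{888} by a slightly different mechanism than the paper. Both arguments rest on the same two inputs: that the Cheeger--Chern--Simons class refines the integral class $c_2$, and that $H^n(G;\Q)=H^n(G;\C)=0$ for $n\geq 1$ when $|G|<\infty$. The paper, however, invokes the naturality of the homomorphism $\textrm{C-C-S}_K: J^k(K,\C)\ra H^{2k-1}(BK;\C/\Z)$ under $\rho: G\ra \mathrm{GL}_N(\C)$, together with the observation that $I^k(G,\C)=0$ forces $\textrm{C-C-S}_G$ to degenerate to $\beta_{\C/\Z}^{-1}$; the chain $\beta^{-1}(c_2(\rho))=\beta_{\C/\Z}^{-1}(c_2(\rho))=\textrm{C-C-S}_G(\rho^*(P),\rho^*(c_2))=\rho^*(\mathrm{CCS})$ then finishes the proof. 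You instead use only the single relation $\beta_{\C/\Z}(\mathrm{CCS})=c_2$ on the classifying space of $\mathrm{GL}_N(\C)$ as a discrete group, push it through $\rho^*$ by naturality of the Bockstein, and conclude by injectivity of $\beta_{\C/\Z}$ on $H^3(G;\C/\Z)$. Your route needs less of the Cheeger--Simons formalism (only the characteristic-class property of $\mathrm{CCS}$, not the functoriality of the refined construction on pairs $(P,u)$), and it makes the first assertion --- that $\rho^*(\mathrm{CCS})$ is a $\Q/\Z$-class --- completely explicit via the isomorphism $j_*: H^3(G;\Q/\Z)\ra H^3(G;\C/\Z)$, a point the paper treats tersely. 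The one external fact you flag, that $\mathrm{CCS}$ is a Bockstein lift of $c_2$ on the discrete classifying space, is indeed the content of the Cheeger--Simons construction restricted to flat bundles (the curvature/invariant-polynomial part vanishes, so the differential character becomes an honest $\C/\Z$-class whose characteristic class is $c_2$); this is a legitimate citation rather than a gap, the only caveat being a sign convention for the Bockstein, which the paper also suppresses. The ``in particular'' part is handled identically in both arguments.
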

There are many procedures to compute the Chern-Simons invariant.
For example, \cite{JW} mentions relations from the $e$-invariant and $\eta$-invariant in differential topology.
If the image of $F$ is contained in $\mathrm{SL}_2(\mathbb{C} )$, the cocycle expression of $ 2 \mathrm{CCS} $ is described in \cite{DZ}.
As a special case, if $M$ is a Seifert manifold over $S^2$ and $H_* (M;\Z) \cong H_* (S^3;\Z)$ and the Seifertor\footnote{As is known in low-dimensional topology, the center of $\pi_1(M)$ contains $\Z$ and is called {\it the Seifertor.}} is sent to trivial by $f $,
Theorem C in \cite{JW}\footnote{In the theorem C, the image is required to be contained in $\mathrm{SL}_N(\C)$; however,
if we consider the inclusion $\mathrm{GL}_{N}(\C) \hookrightarrow \mathrm{SL}_{N+1}(\C)$ which sends $A$ to $\begin{pmatrix}
A& 0\\
0 & \det(A)^{-1} \\
\end{pmatrix} $, we can apply the theorem for the computation of $ \mathrm{CS}(F).$} gives an explicit formula to compute the $2N$--multiple Chern-Simons invariant $2N \rho^*(\mathrm{CCS} ) $; In conclusion, following Proposition \ref{s22848}, the DW invariant with $\psi=2N \beta^{-1}(c_2(\rho )) $ of
Seifert homology spheres can be sometimes easily computed with the help of a computer program; see also \cite{Ch, Y} for the computation of $\Hom(\pi_1(M), G)$.

Now, we give the proof of Proposition \ref{s22848}.
Let $K$ be a complex Lie group of finite connected components,
and $I_k(K, \C)$ denote the ring consisting of $K$-invariant polynomials on the Lie algebra $\mathfrak{k}$.
Recall from the classical Chern-Weil theory that there is a natural
homomorphism $W: I_k(K, \C) \ra H^{2k}(BK^{\rm top}, \C)$, where $BK^{\rm top}$ is the topological classifying space of $K$.
Let $r$ denote the map $H^*(BK^{\rm top} ;\Z) \ra H^* (BK^{\rm top}; \C)$ induced by the inclusion $\Z \hookrightarrow \C$.
From \cite{CS}, consider
\[ J^k (K , \C):=\{(P,u) \in I^k(K, \C) \times H^{2k}(BK^{\rm top};\Z) \mid W(P)=r(u)\}. \]
Then, in \cite{CS}, there is a homomorphism $\textrm{C-C-S}_K: J^k (K , \C ) \ra H^{2k-1} (BK ; \C/\Z)$ such that
any homomorphism $\phi: G \ra K$ ensures the naturality $ \phi^* \circ \textrm{C-C-S}_G = \textrm{C-C-S}_K \circ \phi^*
$; see also Page 1 of \cite{DZ}.
As important examples, it is widely known that if $K= \mathrm{GL}_n (\C)$ and $ u=c_2 \in H^{4}(BK^{\rm top}, \Z) $ is the Chern class, and $P(A) =((\mathrm{Tr}A)^2- \mathrm{Tr}(A^2) )/ 4 \pi^2 $, then $ \textrm{C-C-S}_K (P,u)= \textrm{CCS}$.
Furthermore, if $G$ is of finite order and $ u=c_2(\rho)$, then $BG^{\rm top} =BG $ and $ I^k(G, \C) $ is zero; thus, $ \textrm{C-C-S}_G$ is equal to $\beta_{\C/\Z}^{-1}(c_2(\rho )) $
by definition, where $\beta_{\C/\Z}$ is the Bockstein map from
\begin{equation}\label{seq379}
0 \lra \Z \lra\C \lra \C/\Z \lra 0 \quad \mathrm{(exact)}. \end{equation}
\begin{proof}[Proof of Proposition \ref{s22848}] As mentioned above, $\beta_{\C/\Z}^{-1}(c_2(\rho )) $ lies in $H^3(G ;\Q/\Z)$.
Since $\beta^{-1} = \beta_{\C/\Z}^{-1}$,
the above naturality deduces to %, we have
\[ \beta^{-1} (c_2(\rho )) = \beta_{\C/\Z}^{-1} (c_2(\rho )) = \textrm{C-C-S}_G(\rho^*(P), \rho^*(c_2))= \rho^* (\textrm{C-C-S}_K(P, c_2) ) = \rho^* (\textrm{CCS}) ,\]
which is the required \eqref{888}.
\end{proof}

\subsection*{Acknowledgments}
% The author sincerely expresses his gratitude to Toshiyuki Akita for giving him valuable comments.
The work was partially supported by JSPS KAKENHI, Grant Number 00646903.
The author sincerely thanks an anonymous referee for carefully reading this paper and making many comments.

\normalsize

\vskip 1pc

\normalsize

\noindent
Department of Mathematics, Tokyo Institute of Technology
2-12-1
Ookayama, Meguro-ku Tokyo 152-8551 Japan

\end{document}